\begin{document}
\title{Busemann Functions in Asymptotically Harmonic Finsler Manifolds}
\author {Hemangi  Shah}
\address{Harish-Chandra Research Institute, Chhatnag Road, Jhunsi, Allahabad 211019, India}
\email{hemangimshah@hri.res.in}
\author {Ebtsam  H. Taha}
\address{Department of Mathematics, Faculty of Science, Cairo University, Giza 12613, Egypt,
\\Harish-Chandra Research Institute, Chhatnag Road, Jhunsi, Allahabad 211019, India}
\email{ebtsam.taha@sci.cu.edu.eg}

%%%%%%%%%%%%%%%% END TITLE and AUTHORS %%%%%%%%%%%%%%%%%%%

\BeginPaper %%%%%%% do not remove this command

%%%%%%%%%%%%%%%%%%%%%% YOUR MACROS %%%%%%%%%%%%%%%%%%%%

% put the macroc you need here please

\newcommand{\R}{\mathbb{R}}
\newcommand{\Z}{\mathbb{Z}}
\newcommand{\N}{\mathbb{N}}
\newcommand{\Hy}{\mathbb{H}}

\newcommand{\cA}{\mathcal{A}}
\newcommand{\cB}{\mathcal{B}}
\newcommand{\cC}{\mathcal{C}}
\newcommand{\cF}{\mathcal{F}}
\newcommand{\cH}{\mathcal{H}}
\newcommand{\cL}{\mathcal{L}}
\newcommand{\cM}{\mathcal{M}}
\newcommand{\cN}{\mathcal{N}}
\newcommand{\cO}{\mathcal{O}}
\newcommand{\cR}{\mathcal{R}}
\newcommand{\cS}{\mathcal{S}}
\newcommand{\cU}{\mathcal{U}}
\newcommand{\cV}{\mathcal{V}}
\newcommand{\cW}{\mathcal{W}}

\newcommand{\al}{\alpha}
\newcommand{\be}{\beta}
\newcommand{\ga}{\gamma}
\newcommand{\Ga}{\Gamma}
\newcommand{\de}{\delta}
\newcommand{\De}{\Delta}
\newcommand{\ep}{\varepsilon}
\newcommand{\om}{\omega}
\newcommand{\Om}{\Omega}
\newcommand{\si}{\sigma}
\newcommand{\Si}{\Sigma}
\newcommand{\kap}{\kappa}
\newcommand{\la}{\lambda}
\newcommand{\La}{\Lambda}
\renewcommand{\phi}{\varphi}

\renewcommand{\th}{\theta}

\newcommand{\rank}{\operatorname{rank}}
\newcommand{\subcorank}{\operatorname{corank}}
\newcommand{\dist}{\operatorname{dist}}
\newcommand{\diam}{\operatorname{diam}}
\newcommand{\Hd}{\operatorname{Hd}}
\newcommand{\CAT}{\operatorname{CAT}}
\newcommand{\hyp}{\operatorname{H}}
\newcommand{\id}{\operatorname{id}}
\newcommand{\Lip}{\operatorname{Lip}}
\newcommand{\pr}{\operatorname{pr}}
\newcommand{\size}{\operatorname{size}}
\newcommand{\Area}{\operatorname{Area}}
\newcommand{\const}{\operatorname{const}}
\newcommand{\Ric}{\operatorname{Ric}}
\newcommand{\tr}{\operatorname{tr}}
\newcommand{\Id}{\operatorname{Id}}

\newcommand{\hypdim}{\operatorname{hypdim}}
\newcommand{\asdim}{\operatorname{asdim}}
\newcommand{\sh}{\operatorname{sh}}
\newcommand{\s}{\operatorname {sin}}
\renewcommand{\cos}{\operatorname{cos}}
\newcommand{\ba}{\ostatmentperatorname{ba}}
\newcommand{\st}{\operatorname{st}}
\newcommand{\hcone}{\operatorname{C}_h\!}
\newcommand{\cone}{\operatorname{Co}}
\newcommand{\an}{\operatorname{An}}

\newcommand{\crr}{\operatorname{cr}}

\newcommand{\ca}{\operatorname{ca}}

\newcommand{\lev}{\operatorname{lev}}

\newcommand{\es}{\emptyset}
\renewcommand{\d}{\partial}
\newcommand{\di}{\d_{\infty}}
\newcommand{\set}[2]{\{#1:\,\text{#2}\}}
\newcommand{\sm}{\setminus}
\newcommand{\sub}{\subset}
\newcommand{\sups}{\supset}
\newcommand{\un}{\underline}
\newcommand{\ov}{\overline}
\newcommand{\wt}{\widetilde}
\newcommand{\wh}{\widehat}
\newcommand{\hdim}{\dim_h}
\newcommand{\md}{\!\mod}
\newcommand{\anglin}{\angle_{\infty}}
\newcommand{\T}{{\cal T}}
\newcommand{\tm}{\T M}
\newcommand{\rev}[1]{\overleftarrow{#1}}

\newcommand{\dotle}{\dot{\le}}
\newcommand{\dotge}{\dot{\ge}}
\newcommand{\dotsim}{\dot{\sim}}
\def\loc{\mathop{\mathrm{loc}}\nolimits}
\def\div{\mathop{\mathrm{div}}\nolimits}
\def\apeqA{\SavedStyle\sim}
\def\apeq{\setstackgap{L}{\dimexpr.5pt+1.5\LMpt}\ensurestackMath{%
  \ThisStyle{\mathrel{\Centerstack{{\apeqA} {\apeqA}}}}}}

%%%%%%%%%%%%%%%%%%%%% END YOUR MACROS %%%%%%%%%%%%%%%%%%%%%%

%%%%%%%%%%%%%%%%%%%%%%% ABSTRACT %%%%%%%%%%%%%%%%%%%%%%%%
\begin{abstract}In the present paper we investigate Busemann functions in a general Finsler setting as well as in asymptotically harmonic Finsler manifolds. In particular, we show that Busemann functions are smooth on asymptotically harmonic Finsler manifolds.

\key{Busemann function;  asymptote; harmonic Finsler manifold; asymptotically harmonic Finsler manifold.}

\msc{53C22, 53B40, 53C60, 58J60.}

\end{abstract}
%%%%%%%%%%%%%%%%%%%% END ABSTRACT %%%%%%%%%%%%%%%%%%%%%%%%%%%%%

\section{Introduction}
 Finsler geometry is a generalization of Riemannian geometry which is  richer in content and much wider in scope. Working in the Finsler context may need different techniques that do not exist in the Riemannian framework.  In \cite{harmonicFinsler}, harmonic manifolds have been introduced in the Finsler context. Recently, the study of harmonic  and asymptotically harmonic Finsler manifolds of $(\alpha , \beta)$-type has been discussed in \cite{Taha2}. It is known that Busemann functions play an important role in the investigation of the geometry of noncompact complete Riemannian manifolds with negative sectional curvature and  harmonic manifolds (cf. \cite{Andreev, Shah03}). The convexity of Busemann functions is essential  for the study of Hadamard Riemannian manifolds (cf.  \cite{Hada}).  On the other hand,  Busemann functions have been used in the study of reversible Finsler manifolds of negative flag curvature~\cite{egloff}, the splitting theorems for Finsler manifolds of non-negative Ricci curvature \cite{Osplit} and recently the relation between affine functions and  Busemann functions on complete Finsler manifold are treated in \cite{Bus2021}.  Also, Finsler manifolds whose Busemann functions are convex  have been studied in \cite{Sabau2}. The authors in \cite{Andreev, Kell, Knieper, Shiohama, ShiBanktesh} offered insightful discussions about Busemann functions in both complete Riemannian  and Finslerian manifolds.

Our aim is to analyze Busemann functions in the context of Finsler geometry and then apply the obtained results to study  asymptotic harmonic Finsler (AHF) manifolds.  For example, in a forward complete Finsler manifold we find the relation between Busemann functions associated with two asymptotic rays (in Eqn.~\eqref{two asymptotic rays}). Also, we prove any two rays in an AHF-manifold are asymptotic if and only if the corresponding Busemann functions agree upto a constant (in Theorem \ref{Thm: two asymptotic rays}).  Our results lead to the conclusion that Busemann functions are smooth in an AHF-space (see Theorem \ref{smooth bv}),  which is a generalization of \cite[Theorem 3.1]{ Shah03} from the Riemannian  to the Finsler context. Further, in Proposition \ref{bi-asymptotics}, it has been proved that if the horospheres of an AHF-manifold are minimal, then they have the bi-asymptotic property, that is, their asymptotic geodesics are bi-asymptotic.  

 The structure of the present work is as follows. Section 2 is devoted to some preliminaries needed for better exposition of our work. Thereafter, in \S 3,  we give some properties of the Busemann functions in a connected Finsler manifold without conjugate points. Then, we study the relation  of Busemann functions of asymptotic rays in a forward complete Finsler manifold. Finally, in \S 4, we conclude our work with the exploration of Busemann functions in the case of AHF-manifolds.
 \section{Preliminaries}
We use the following notations:  $M$ denotes an $n$-dimensional, $n>1$,  orientable connected  smooth manifold, 
$(TM, \pi, M)$, or simply $TM$, its tangent bundle and $TM_{0}:=TM\setminus \{0\}$ the tangent bundle with the null section removed. The tangent  space at each $x \in M$ without the zero vector is denoted by  $T_x M_{0}$.  The local coordinates $(x^{i})$ on $M$ induce  local
coordinates $(x^{i},y^{i})$ on $TM$.  Moreover, 
$\partial_i$ and $\dot{\partial}_i$ denote partial differentiation with respect to $x^i$ and $y^i$, respectively. 
 
\begin{definition} \label{Finslerdef}\cite{BCS} A Finsler structure on a manifold $M$  is a mapping $F:TM\rightarrow [0, \infty )$ such that $F$ is $C^\infty$ on $TM_0$,  positively homogeneous of degree one in $y$ and the Hessian matrix $(g_{ij}(x,y))_{1 \leq i,j \leq n}$
is positive definite at each point $y$ of $TM_0$, where $\displaystyle{g_{ij}(x,y):=\frac{1}{2} \dot{\partial}_i\dot{\partial}_j F^2(x,y)}$.
\end{definition}
We refer to \cite{BCS, Shlec} for further reading about Finsler geometry.  A Finsler metric is Riemannian when  $g_{ij}(x,y)$ are functions in $x$ only. Further, a Finsler metric can be characterized in any tangent space $T_{x}M$ by its unit vectors, which form a smooth strictly convex hypersurface $I_x M$ called \emph{indicatrix} at the point $x \in M$.  
 When a Finsler metric is Riemannian, this hypersurface at each point of $M$ is a Euclidean unit sphere. The indicatrix of $F$ is $IM:=\displaystyle\cup_{x\in M}I_{x}M$.

 The distance $d_{F}$ induced by $F$ is defined in $M$ by \cite{Shlec, Tam08}$$d_{F}(p,q):= \inf \left\lbrace\int^{1}_{0} F(\dot{\eta}(t))\, dt\,\ | \, \eta: [0,1]  \rightarrow M,\,  C^{1}\text{ curve joining }p \text{ to } q \right\rbrace.$$
 %A curve $c$ on M is call a \textit{geodesic} if it is locally minimizing 
 \begin{remark}
 \begin{enumerate}[label={\upshape(\roman*)},ref={\upshape(\Roman*)},topsep=1pt, leftmargin=5ex, labelwidth=5ex]
 \item The Finsler distance is nonsymmetric, that is, $d_{F}(p,q)\neq d_{F}(q,p)$. In other words, the Finsler distance depends on the direction of the curve. Therefore, the reverse of a general Finsler geodesic  
can not be a geodesic. The non-reversibility property is also reflected in the notion of Cauchy sequence and completeness \cite[\S 6.2]{BCS}.
\item Thus, being different from the Riemannian case,  a positively (or forward) complete Finsler manifold $(M , F)$ is not necessarily negatively (or backward) complete.  The classical Hopf-Rinow theorem splits into forward and backward versions \cite[\S 6.6]{BCS}.  A Finsler metric is   \emph{complete} if it is both forward and backward complete.
\item
Another main difference between Finsler and Riemannian geometries is that in a general Finsler manifold, the exponential map is only $C^{1}$ at the origin of $T_{x} M$ and it is $C^{\infty}$ on $T_{x} M_{0}$.
\end{enumerate}   
\end{remark} 
 
A volume measure $d\mu$ (nondegenerate $n$-volume form) on $M$ can be written in  local coordinates  as  $d\mu = \sigma_{\mu}(x) \,dx^{1} \wedge ... \wedge dx^{n}=\sigma_{\mu}(x) \,dx$, where $\sigma_{\mu}(x)$ is a positive smooth function on $M$. Unlike Riemannian geometry, there are several non-equivalent definitions of volume forms used within Finsler geometry.  The most well known ones are \emph{Busemann-Hausdorff $d\mu_{BH}$ and Holmes-Thompson $d\mu_{HT}$ volume forms} \cite{StrongPrinciple}.  Otherwise stated, we work with arbitrary but fixed volume form $d \mu$. That is, the forthcoming definitions and results hold for either Busemann-
Hausdorff volume form or Holmes-Thompson volume form.

%\vspace{5pt}
It is known that,  if $F$ is a Finsler structure on $M$, then $F$ induces at each point $x\in M$  a Minkowski norm on $T_x M$. Also, $F^*$, the dual structure of $F$,  induces a Minkowski norm on $T_x^*M$.  That is, $F^*: T^*M \rightarrow \R^+$  is defined, for all $(x,\alpha) \in T^*M$, by $$F^*(x,\alpha) :=\sup\{ \alpha (\xi) \,:\, \xi \in I_xM \}. $$  The dual metric associated to $F^*$ is given by 
  $g^*_{ij}(x,\alpha) :=\frac{1}{2}\,\frac{\partial^2 F^{*2}(x,\alpha)}{\partial \alpha^i \partial \alpha^j}  .$ 
  
%\vspace{5pt}
   
The \textit{Legendre transformation} $J: TM \rightarrow T^*M$ associated with $F$ is defined, for any point $x \in M$, by 
$J(x,y)= g_{ij}(x,y)\, y^{i}\, dx^{j}, \,\,\forall y \in T_{x}M_{0} \text{ and  }J(0)=0 .$ Let  $J^*: T^*M \rightarrow TM$ defined by
$$J^{*}(x,\alpha)= g^*_{ij}\big( x,\alpha \big)\, \alpha_{i}\, \partial_{j},\,\, \forall \alpha \in T^{*}_{x}M_{0} \text{ and  }J^{*}(0)=0,$$
where $g^*_{ij}(x,\alpha) := g^{ij} (J^*(\alpha)).$

\begin{definition}\cite[\S 3.2]{Shlec} The gradient of a differentiable function $f:M \rightarrow \R$  at a point $x \in M$, where $df(x) \neq 0$,   is defined by 
\begin{equation}\label{graddef}
\nabla f(x)=J^*\big( x,df(x) \big)= g^*_{ij}\big( x,df(x) \big)\,\, \partial_{i} f(x)\, \partial_{j}.
\end{equation}
 $df(x)$ can then be written  as follows
\begin{equation}\label{grad}
df(x,v)= g_{\nabla f(x)}(\nabla f(x), v), \,\, \forall v \in T_{x}M.
\end{equation}
\end{definition}
\begin{remark}
Unlike the Riemannian gradient, the gradient $\nabla f(x)$ is non-linear.  It should be noted that when $df(x) = 0$, the gradient $\nabla f(x)$ is  defined to be zero.
\end{remark}
\begin{definition}\cite{Shlec}
A smooth function $f: M \longrightarrow \mathbb{R}$ is called a Finsler distance if $F(\nabla f)=1$.
\end{definition}
A distance function $r$ defined on an open subset $\Omega$ of $(M,F)$ has some interesting geometric properties. Indeed, $\nabla r$ is a unit vector field on $\Omega$ and it induces a smooth Riemannian metric on $\Omega$ defined by $$\hat{F}(x,v):= \sqrt{g_{\nabla r}(v,v)},\,\, \forall v \in TM.$$ Furthermore, $\hat{F}(\hat{\nabla} r)= F(\nabla r) =1$ by \cite[Lemma 3.2.2]{Shlec}.
\begin{definition}\cite[\S 14.1]{Shlec}
Let $(M,F,d \mu)$ be a Finsler $\mu$-space. 
 For a  $ C^2 $ function $ f$,  the Shen's Laplacian $\Delta f$ of $f$ is defined by $\Delta f=\div_{\mu}(\nabla f)$, that is,
\begin{align}\label{shenlapdef}
\Delta f &=\frac{1}{\sigma_{\mu}(x)} \,\partial_{k} \left[ \sigma_{\mu}(x) \; g^{kl}(x,\nabla f(x)) \; \partial_{l} f \right] \nonumber \\ &=   \left[ g^{kl}(x,\nabla f(x))\, \partial_{k}\left(\log(\sigma_{\mu}(x)\right) +\partial_{k}(g^{kl}(x,\nabla f(x)))\right] \partial_{l} f\\ &+   \, g^{kl}(x,\nabla f(x))\, \partial_{l} \partial_{k} f. \nonumber 
\end{align}
\end{definition}
\begin{remark}
Shen's Laplacian is fully non-linear elliptic differential operator of the second order, cf. \cite{Caponio},  which depends on the measure $\mu$ and it is defined on $U_{f}:=\{x \in M\, |\, df(x)\neq 0\}$ by  $(\ref{shenlapdef})$, and to be zero on $\{x \in M \,| \,df(x)= 0\}$.
\end{remark}
\begin{definition}\cite[\S 14.1]{Shlec}
For $u \in H^1_{loc}(M)$, the weak (or distributional) Laplacian of $u$ is defined  by  
\begin{equation}\label{weaklapdef}
 \int_M \phi\Delta u \,d\mu=-\int_M d\phi(\nabla u) \,d\mu,
 \quad \text{for all}\ \phi \in \cC_c^{\infty}(M).
\end{equation}
\end{definition}
\begin{definition}\cite[\S 14.3]{Shlec} 
The Finsler mean curvature of the level hypersurface $r^{-1}(t)$ at $x \in M$  with respect to $\nabla r_{x}$ is defined by  
\begin{equation}\label{Fmeancurvaturedef}
\Pi_{\nabla r}(x):= \frac{d}{dt}\,\log(\sigma_{x}(t,x^{a}))|_{t=t_{o}}, \text{ for some }  t_{o} \in  \textit{Im}(r).
\end{equation}
\end{definition}
The Finsler Laplancian of a distance function $r$ satisfies $\Delta \, r(x)=\Pi_{\nabla r}(x)$ \cite{curdisvol}.
\begin{definition}\cite{harmonicFinsler}
A forward complete Finsler manifold $(M, F)$ endowed  with a smooth volume measure $d\mu$  is (globally) harmonic  if in polar coordinates the volume density function $\overline{\sigma}_{p}(r,y)$ is a radial function around (each) $ p \in M,$ where $\overline{\sigma}_{p}(r,y):= \frac{{\sigma}_{p}(r,y)}{\sqrt{\det(\dot{g}_p(p,y))}}$  and $\dot{g}_p$ is the restriction of $g$ on the indricatrix $I_{p}M$.  That is,  $\overline{\sigma}_{p}(r,y)$ is independent of $y \in I_{p}M$; thus it can be written  as~$\overline{\sigma}_{p}(r)$.
\end{definition}
\begin{theorem}\label{Charact. Laplancain} \cite{harmonicFinsler}
Let  $(M, F, d\mu)$ be a forward complete Finsler $\mu$-manifold.  The following are equivalent:  $(1)$ $(M, F, d\mu)$ is harmonic,   $(2)$ Shen's Laplancian of a distance function is  radial, $(3)$  the Finsler mean curvature of all geodesic spheres of sufficiently small radii (all radii), expressed in polar coordinates, is a radial function.
\end{theorem}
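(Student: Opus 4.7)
The plan is to use (3) as the hub, and establish the two equivalences (2) $\Leftrightarrow$ (3) and (1) $\Leftrightarrow$ (3) separately.

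First I would dispatch (2) $\Leftrightarrow$ (3). This is essentially immediate from the identity $\Delta r(x) = \Pi_{\nabla r}(x)$ recalled from \cite{curdisvol}, since radiality of one side (dependence only on $r$, not on $y \in I_p M$) is transparently equivalent to radiality of the other.

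For (1) $\Leftrightarrow$ (3), I would work in geodesic polar coordinates $(r,y)$ around an arbitrary $p \in M$ and write the volume density as $\sigma_p(r,y)$. By the definition~\eqref{Fmeancurvaturedef}, at $x = \exp_p(r y)$,
\begin{equation*}
\Pi_{\nabla r}(x) \;=\; \frac{\partial}{\partial r}\log \sigma_p(r,y).
\end{equation*}
Because the factor $\sqrt{\det(\dot g_p(p,y))}$ depends only on $y$, not on $r$, taking logarithmic derivatives in the definition $\overline{\sigma}_p(r,y) = \sigma_p(r,y)/\sqrt{\det(\dot g_p(p,y))}$ gives
\begin{equation*}
\frac{\partial}{\partial r}\log \overline{\sigma}_p(r,y) \;=\; \frac{\partial}{\partial r}\log \sigma_p(r,y) \;=\; \Pi_{\nabla r}(x).
\end{equation*}
Thus $\Pi_{\nabla r}$ is radial for all sufficiently small $r$ iff $\partial_r \log \overline{\sigma}_p(r,y)$ is independent of $y \in I_pM$. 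Integrating from a small $r_0>0$ to $r$ then yields $\overline{\sigma}_p(r,y) = \overline{\sigma}_p(r_0,y)\,\varphi(r)$ for a positive function $\varphi$ independent of $y$, reducing the question to whether the $y$-profile $\overline{\sigma}_p(r_0,\cdot)$ is constant.

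The main obstacle is exactly this last step: to upgrade from radiality of $\partial_r \log \overline{\sigma}_p$ to radiality of $\overline{\sigma}_p$ itself, I would appeal to the initial asymptotics of the density in geodesic polar coordinates. The normalization by $\sqrt{\det(\dot g_p(p,y))}$ is chosen precisely so that $\overline{\sigma}_p(r,y)/r^{n-1} \to 1$ as $r \to 0^+$, uniformly in $y$; this is the Finsler analogue of the Riemannian Jacobi field expansion along a radial geodesic. Letting $r_0 \to 0^+$ in the identity above then forces $\overline{\sigma}_p(r_0, y) \to r_0^{n-1}$ independently of $y$, so the $y$-dependence in the product drops out and $\overline{\sigma}_p(r,y)$ becomes a function of $r$ alone, completing (3) $\Rightarrow$ (1). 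The converse is a direct differentiation. Passing from the "all radii" to the "sufficiently small radii" statement in (3) is handled by noting that the argument above is local in $r$ and by analyticity/continuity of the density in the radial direction along each geodesic.
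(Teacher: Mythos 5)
The paper does not actually prove this theorem: it is quoted verbatim from the authors' earlier work \cite{harmonicFinsler}, so there is no in-text argument to compare yours against. Judged on its own, your proposal follows what is surely the intended route. The equivalence $(2)\Leftrightarrow(3)$ via the identity $\Delta r=\Pi_{\nabla r}$ is exactly right, and your treatment of $(1)\Leftrightarrow(3)$ correctly isolates the real content: $\Pi_{\nabla r}=\partial_r\log\sigma_p(r,y)=\partial_r\log\overline{\sigma}_p(r,y)$, so radiality of the mean curvature only gives radiality of the \emph{logarithmic derivative} of $\overline{\sigma}_p$, and one must kill the $y$-dependent "constant of integration" using the small-$r$ asymptotics. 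Your observation that the normalization by $\sqrt{\det(\dot g_p(p,y))}$ is designed precisely so that $\overline{\sigma}_p(r,y)/r^{n-1}$ tends to a $y$-independent limit as $r\to 0^+$ (it need not be $1$ --- for a general volume form the limit involves $\sigma_\mu(p)$ --- but only its $y$-independence matters) is the right key, and letting $r_0\to 0^+$ in $\overline{\sigma}_p(r,y)=\overline{\sigma}_p(r_0,y)\,\varphi_{r_0}(r)$ does close the argument. The converse $(1)\Rightarrow(3)$ by differentiation is immediate, as you say.

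The one genuine gap is your last sentence, the passage from ``sufficiently small radii'' to ``all radii'' in (3). You appeal to ``analyticity/continuity of the density in the radial direction,'' but a Finsler metric is only assumed $C^\infty$, and continuity alone cannot propagate the identity $\partial_y\,\overline{\sigma}_p(r,\cdot)=0$ from $r\in(0,\epsilon)$ to all $r$. In the Riemannian case this local-to-global step is nontrivial: one first shows a locally harmonic metric is Einstein, invokes elliptic regularity to get real-analyticity in harmonic coordinates, and only then continues analytically in $r$. None of that machinery is available for Shen's (nonlinear, direction-dependent) Laplacian, so this step cannot be waved through; it either needs a genuinely Finslerian argument (e.g.\ a global identity for the density along radial geodesics using forward completeness and absence of conjugate points) or must be taken as part of what is being cited from \cite{harmonicFinsler}. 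Everything else in your proposal is sound.
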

\begin{definition}\cite{harmonicFinsler} The Finsler mean curvature of horospheres  $\Pi_{\infty}$ is the mean curvature of the Finsler spheres of infinite radius, which is defined by $$\Pi_{\infty} =\displaystyle\lim_{r \to \infty}  \Pi_{\nabla r}(x).$$
\end{definition}
\begin{definition}\cite{harmonicFinsler}
A forward complete, simply connected Finsler $\mu$-manifold $(M, F, d\mu)$ without conjugate points  is called asymptotically harmonic Finsler manifold (or shortly, AHF-manifold) if the Finsler mean curvature of horospheres is a real constant $h$.
\end{definition}
 Thus, a noncompact harmonic Finsler manifold with  constant Finsler mean curvature of horospheres is an AHF-manifold. Examples of AHF-manifold are given in \cite{harmonicFinsler}.  Further, \cite{harmonicbook16,  Knieper, z1} offered perceptive discussions about harmonic and asymptotically Riemannian  manifolds.

%%%%%%%%%%%%%%%%%%%%%%%%%%%%%%%%%%%%%%%%%%%%%%%%%%
\section{Analysis of Busemann Functions with Applications} 
An effective tool to study various topics in differential geometry, such as the structure of harmonic spaces in Riemannian geometry, is  Busemann functions. 
 For more details about Busemann functions   see \cite{z1,Papadopoulos, Pet16, Shah03, ShiBanktesh}  in the Riemannian context and \cite{egloff, Kell,  Osplit, Sabau, ShiBanktesh} in the Finsler context. 
\begin{definition}\cite{Osplit} 
Let $(M,F) $ be a forward complete Finsler manifold. A geodesic $\gamma :[0,\infty] \rightarrow M $ is called a forward ray if  
it is a globally minimizing unit speed Finslerian geodesic, that is, $d_{F}(\gamma (s), \gamma (t))= t-s\, \,\,\, \forall \, s < t$ and $F(\dot{\gamma}) =~1$.
\end{definition}
Now, we recall the definition of {\em Busemann
functions} in the context of Finsler geometry~\cite{Osplit,  Sabau, Shiohama} and discuss some of its general properties.  

Let $(M, F)$ be a forward complete noncompact Finsler manifold without conjugate points, there always exists a forward ray $\gamma :[0,\infty) \rightarrow (M,F) $ emanating  
from each point $p:= \gamma(0) \in M$ \cite{Sabau}.  Associated to the ray $\gamma$, we define the following function 
$$b_{\gamma,t} (x) =  d_{F} (x,\gamma(t)) - t,\,\,\, \forall x \in M,$$ where $d_{F}$ is the Finsler distance  which is nonsymmetric.
%Main properties of $b_{\gamma,t} (x)$ are the followings:
\begin{lemma}\label{lema3.2}
 For each $x \in M$, the function $b_{\gamma,t} (x)$ is monotonically decreasing with $t$. Moreover, $b_{\gamma,t} (x)$ is bounded below. 
\end{lemma}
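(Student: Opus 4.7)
The plan is to establish both claims directly from the triangle inequality, using the fact that $\gamma$ is a forward ray (hence globally minimizing).

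For the monotonicity, I would take $0 \le s < t$ and try to show $b_{\gamma,t}(x) \le b_{\gamma,s}(x)$, which is equivalent to
\[
d_F(x,\gamma(t)) - d_F(x,\gamma(s)) \le t-s.
\]
The triangle inequality for the (possibly non-symmetric) Finsler distance gives $d_F(x,\gamma(t)) \le d_F(x,\gamma(s)) + d_F(\gamma(s),\gamma(t))$, and since $\gamma$ is a forward ray we have $d_F(\gamma(s),\gamma(t)) = t-s$. Rearranging yields the required inequality, and hence $b_{\gamma,t}(x) - b_{\gamma,s}(x) \le 0$, so $t \mapsto b_{\gamma,t}(x)$ is monotonically decreasing.

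For the lower bound, I would apply the triangle inequality to the triple $\gamma(0), x, \gamma(t)$ in the correct order to account for non-symmetry:
\[
t = d_F(\gamma(0),\gamma(t)) \le d_F(\gamma(0),x) + d_F(x,\gamma(t)),
\]
using once again that $\gamma$ is minimizing so $d_F(\gamma(0),\gamma(t)) = t$. Rearranging gives $d_F(x,\gamma(t)) - t \ge -d_F(\gamma(0),x)$, so
\[
b_{\gamma,t}(x) \ge -d_F(\gamma(0),x),
\]
which is independent of $t$, establishing the lower bound.

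The only point requiring care is the non-symmetry of $d_F$: one must choose the directions in the triangle inequality consistently so that the minimizing identity $d_F(\gamma(s),\gamma(t)) = t-s$ (valid only in the forward direction along the ray) can be applied. I do not anticipate a genuine obstacle beyond this bookkeeping. Since $b_{\gamma,t}(x)$ is monotonically decreasing in $t$ and bounded below, the limit $b_\gamma(x) := \lim_{t\to\infty} b_{\gamma,t}(x)$ exists and is finite, which is exactly the content one needs to subsequently define the Busemann function.
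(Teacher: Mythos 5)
Your proposal is correct and follows essentially the same route as the paper: monotonicity via $d_F(x,\gamma(t)) \le d_F(x,\gamma(s)) + d_F(\gamma(s),\gamma(t)) = d_F(x,\gamma(s)) + t - s$, and the lower bound $b_{\gamma,t}(x) \ge -d_F(\gamma(0),x)$ from the triangle inequality applied to $\gamma(0), x, \gamma(t)$ in that order. The attention you pay to the ordering of arguments in the non-symmetric distance matches the paper's treatment exactly.
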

\begin{proof} Let $s,t \in [0,\infty )$ such that  $s < t$, using triangle inequality for the nonsymmetric distance $d$, we have
\begin{align*}
t-s&=d_{F}(\gamma (s), \gamma (t)) \leq d_{F}(\gamma (s), x) + d_{F}(x, \gamma (t)) \\  \Longrightarrow 
\qquad \, \, \,\qquad -s &\leq d_{F}(\gamma (s), x) + d_{F}(x, \gamma (t))-t =d_{F}(\gamma (s), x)+b_{\gamma,t} (x)\\ \Longrightarrow 
 \quad -d_{F}(\gamma (0), x)&\leq b_{\gamma,t} (x)~~~~\text{at}~s=0.
\end{align*}
Hence, $b_{\gamma,t} (x)$ is bounded below by $-d_{F}(\gamma (0), x)$.

%\vspace{4pt}
To prove the decreasing of  $b_{\gamma,t}$,  let $x \in M$ be arbitrary but fixed and let $s <~t $. Using triangle inequality, we have
\begin{align*}
&d_{F}(x, \gamma (t)) \leq  d_{F}(x,\gamma (s)) + d_{F}(\gamma (s),\gamma (t)) = d_{F}(x,\gamma (s)) + t-s \\
&\Longleftrightarrow  d_{F}(x, \gamma (t))-t \leq d_{F}( x, \gamma (s))-s\Longleftrightarrow
b_{\gamma,t} (x)\leq b_{\gamma,s} (x).
\end{align*}
\vspace{-1.2 cm}
\[\qedhere\]
\end{proof}

In view of Lemma \ref{lema3.2}, the limit of the function $b_{\gamma ,t}(x)$ as $t \rightarrow \infty$ exists. This justifies the following definition.
\begin{definition}
The limit $b_{\gamma}(x)$ of $b_{\gamma ,t}(x)$ is called \textit{ the Busemann function associated to the ray $\gamma$}:
\begin{equation}\label{Busemann function}
b_{\gamma}(x) =  \lim_ {t \rightarrow \infty} b_{\gamma,t} (x)= \lim_ {t \rightarrow \infty} \big( d_{F}(x,\gamma(t)) - t \big).
\end{equation}
\end{definition}
We now give some properties of the Busemann functions:
\begin{proposition}\label{properties of Busemann function}
For a forward complete simply connected Finsler manifold without conjugate points, the following hold:
\begin{enumerate}[label={\upshape(\arabic*)},ref={\upshape\arabic*}]
\item Along the ray $\gamma(t)$, we have  $b_{\gamma} (\gamma(t))=-t,\, \forall t>0$. Therefore, $b_{\gamma}(\gamma(0)) = b_{\gamma}(p) = 0.$
\item $b_{\gamma} $ is $1$-Lipschitz in the sense that  \begin{equation}\label{1-Lipschitz}
-d_{F}(y,x)\leq b_{\gamma}(y)-b_{\gamma}(x)\leq d_{F}(x,y),\,\, x,y \in M.
\end{equation} Hence, $b_{\gamma}$ is differentiable almost everywhere and  $b_{\gamma}$ is uniformly continuous.
\item $ b_{\gamma,t} $ converges to $b_{\gamma}$ uniformly on
each compact subset of $M$. 
\end{enumerate}
\end{proposition}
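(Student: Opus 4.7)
For (1) the plan is a direct computation. For any $t_0 \ge 0$ and any $t \ge t_0$, the globally minimising property of the forward ray $\gamma$ gives $d_F(\gamma(t_0),\gamma(t)) = t - t_0$, so $b_{\gamma,t}(\gamma(t_0)) = (t-t_0) - t = -t_0$ independently of $t$. Letting $t\to\infty$ yields $b_\gamma(\gamma(t_0)) = -t_0$, and specialising to $t_0=0$ gives $b_\gamma(p)=0$.

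For (2) I would first establish the estimate at each finite level $t$ by two applications of the (nonsymmetric) triangle inequality, and then pass to the limit. The chain $y \to x \to \gamma(t)$ gives $d_F(y,\gamma(t)) \le d_F(y,x) + d_F(x,\gamma(t))$, and subtracting $t$ yields $b_{\gamma,t}(y) - b_{\gamma,t}(x) \le d_F(y,x)$; the swapped chain $x \to y \to \gamma(t)$ gives the reverse bound $b_{\gamma,t}(x) - b_{\gamma,t}(y) \le d_F(x,y)$. Since the pointwise limit exists by Lemma~\ref{lema3.2}, these inequalities transfer to $b_\gamma$ and produce~\eqref{1-Lipschitz}. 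Uniform continuity is then immediate, because on any compact set $d_F$ is bi-Lipschitz equivalent to a symmetric auxiliary metric, and differentiability almost everywhere follows from Rademacher's theorem applied to the locally Lipschitz function $b_\gamma$ in local smooth charts (the Lebesgue null sets agree with the null sets of the volume measure $d\mu$).

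For (3) the plan is to invoke Dini's theorem. Each $b_{\gamma,t}$ is continuous (indeed $1$-Lipschitz, by the same argument as in~(2) applied at the level of $b_{\gamma,t}$); the family is monotonically decreasing in $t$ by Lemma~\ref{lema3.2}; and the pointwise limit $b_\gamma$ is continuous by~(2). On a compact set $K \subset M$, Dini's theorem upgrades monotone pointwise convergence with continuous limit to uniform convergence. Alternatively, equicontinuity of the family (the common Lipschitz constant $1$) plus pointwise convergence admits a direct $\varepsilon/3$-argument on a finite net in $K$, which is what I would fall back on if Dini needed a sequential reformulation because $t$ is continuous.

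The only delicate aspect I anticipate is the nonsymmetry of $d_F$: in each triangle inequality the arguments must appear in the correct forward order, otherwise the bound in~\eqref{1-Lipschitz} becomes genuinely wrong rather than merely cosmetically different (swapping $d_F(x,y)$ and $d_F(y,x)$ alters the content of the inequality). This is bookkeeping rather than a real obstacle, but it is the one point in the proof where casual symmetry intuition from the Riemannian setting would lead astray.
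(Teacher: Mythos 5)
Your proposal is correct and follows exactly the route the paper itself takes (the paper's proof is a one-liner citing the definition, the triangle inequality, and Dini's theorem; you simply fill in those steps). One point deserves attention, precisely at the spot you flag as delicate. Your careful bookkeeping yields
\[
-d_{F}(x,y)\;\le\; b_{\gamma}(y)-b_{\gamma}(x)\;\le\; d_{F}(y,x),
\]
obtained from $d_{F}(y,\gamma(t))\le d_{F}(y,x)+d_{F}(x,\gamma(t))$ and its companion with $x$ and $y$ interchanged. This is the inequality the nonsymmetric triangle inequality actually delivers, and it is consistent with the lower bound $b_{\gamma,t}(x)\ge -d_{F}(\gamma(0),x)$ established in Lemma~\ref{lema3.2} (take $x:=p$, $y:=x$ in your version). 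Note, however, that it is \emph{not} literally the displayed inequality~\eqref{1-Lipschitz}, in which the two distance terms appear with their arguments transposed; the two forms genuinely differ when $d_F$ is nonreversible. So either you should record explicitly that the estimate you prove is the one above (and that~\eqref{1-Lipschitz} as printed should be read with the arguments swapped), or you are proving a statement different from the one asserted. Everything else --- the computation $b_{\gamma,t}(\gamma(t_0))=-t_0$ for $t\ge t_0$ in (1), passing the finite-$t$ Lipschitz bounds to the limit, Rademacher in charts for almost-everywhere differentiability, and Dini (or the equicontinuity $\varepsilon/3$ fallback, which is the cleaner option given the continuous parameter $t$) for (3) --- is sound and matches the paper's intent.
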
 
\begin{proof} 
$(1)$ follows directly from Eqn.~(\ref{Busemann function}). $(2)$ follows from the triangle inequality. $(3)$ follows from Dini's theorem.
\end{proof}
It should be noted that Proposition \ref{properties of Busemann function} is proved in (cf. \cite{Osplit, Sabau}) under the condition that $(M,F)$ is noncompact forward complete Finsler manifold.  In addition, in the Riemannian case there are similar results to ours in \cite{Knieper, Mare, Sormani} under different assumptions.
\begin{proposition}\cite{curdisvol} One can compute  Busemann functions in a vector space equipped with a Finsler structure $(V,F)$ as follows: for any vector $v \in V,$ the Busemann function $b_v$ associated to the ray $\eta_{v}(t)=tv,\, 0<t< \infty ,$ is given by \begin{equation}\label{calculatingBusemannfn}b_v(y)= - y^{i}\, \frac{\partial F(v)}{\partial y^{i}}. \end{equation}\end{proposition}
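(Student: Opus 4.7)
The plan is to reduce the computation to a first-order Taylor expansion of $F$ at $v$, exploiting the positive one-homogeneity of $F$ in its fibre variable.

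First, I would observe that in a vector space $V$ equipped with a Finsler (Minkowski) structure, the translation-invariance of $F$ together with its convexity implies that oriented straight segments realize the Finsler distance. Hence, for every $t$ large enough that $tv\neq y$,
$$d_{F}(y,\eta_{v}(t)) \;=\; d_{F}(y,tv) \;=\; F(tv-y).$$
Moreover, since $\eta_{v}$ is a forward ray it is unit speed, which forces $F(v)=1$.

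Next, I would use positive homogeneity of degree one to factor out $t$: for $t>0$, $F(tv-y)=t\,F\!\left(v-\tfrac{y}{t}\right)$, so
$$b_{v}(y) \;=\; \lim_{t\to\infty}\bigl(F(tv-y)-t\bigr) \;=\; \lim_{t\to\infty} t\bigl[F(v-y/t)-F(v)\bigr].$$
The core step is then a Taylor expansion at $v$. Since $v\neq 0$ and $F$ is $C^{\infty}$ on $TM_{0}$ (Definition \ref{Finslerdef}), for $t$ sufficiently large the point $v-y/t$ lies in a neighbourhood of $v$ on which $F$ is smooth, and
$$F(v-y/t) \;=\; F(v) \;-\; \tfrac{1}{t}\,y^{i}\,\frac{\partial F(v)}{\partial y^{i}} \;+\; O(1/t^{2}).$$
Multiplying by $t$ and passing to the limit yields $b_{v}(y) = -\,y^{i}\,\partial F(v)/\partial y^{i}$, as desired.

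The step I expect to require the most care is the identification $d_{F}(y,tv)=F(tv-y)$: the paper works with general non-reversible Finsler manifolds, so one should justify explicitly that on a translation-invariant Minkowski structure the asymmetric triangle inequality makes the oriented straight segment from $y$ to $tv$ the forward-minimizing curve. Once this reduction is in place, the remaining homogeneity factorization and Taylor expansion are soft and give the formula directly.
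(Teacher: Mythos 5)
Your argument is correct. Note that the paper itself gives no proof of this proposition --- it is quoted from Shen's \emph{Curvature, distance and volume in Finsler geometry} --- so there is nothing internal to compare against; your route (translation-invariance of the Minkowski norm gives $d_F(y,tv)=F(tv-y)$, then positive $1$-homogeneity and a first-order expansion of $F$ at $v\neq 0$, where $F$ is smooth) is the standard derivation and is exactly what the cited source does. You correctly flag the two points that actually need checking: that the oriented segment forward-minimizes in a (possibly non-reversible) Minkowski space, which follows from convexity and homogeneity of $F$ via the integral form of Jensen's inequality, and that the unit-speed normalization $F(v)=1$ is what kills the otherwise divergent term $t\bigl(F(v)-1\bigr)$ in the limit.
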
 
\begin{lemma}\label{properties of Fdistance}
Let $(M,F)$ be a forward complete
Finsler manifold and let $f$ be a Finsler distance on $M$, then the following assertions hold:
\begin{enumerate}[label={\upshape(\arabic*)},ref={\upshape\arabic*}]
\item The level sets of $f$ have no critical points and
viz. $f^{-1}(c)$ for any $c$ are smooth hypersurfaces in $M$.
\item The integral curves of $\nabla f$ are unit speed Finslerian geodesics.
\item The level sets of $f$ are parallel hypersurfaces along the direction $\nabla f$. Consequently, $f$ is linear along the integral curves of $\nabla f$.  
\end{enumerate}
\end{lemma}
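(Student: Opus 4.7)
The plan is to treat the assertions in the order (1), the linearity half of (3), (2), and finally the parallel-hypersurface half of (3). For (1), the defining condition $F(\nabla f)\equiv 1$ forces $\nabla f$ to be nonvanishing on $M$, and by~\eqref{graddef} this is equivalent to $df\neq 0$ everywhere; the regular value theorem then makes every $f^{-1}(c)$ a smooth embedded hypersurface, and in particular $f$ has no critical points.

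Next, let $\gamma$ be an integral curve of $\nabla f$. Since $F(\dot\gamma)=F(\nabla f)=1$ it is automatically parametrised by Finsler arclength, which is the unit-speed half of (2). Substituting $v=\nabla f$ into~\eqref{grad} gives
\[
\frac{d}{dt}f(\gamma(t))=df_{\gamma(t)}(\nabla f)=g_{\nabla f}(\nabla f,\nabla f)=F^{2}(\nabla f)=1,
\]
so $f\circ\gamma$ is affine with slope one, which is the linearity claim in (3). To upgrade $\gamma$ to a geodesic I would combine this with the one-sided Lipschitz estimate $f(y)-f(x)\le d_{F}(x,y)$, obtained by applying the Finsler Cauchy--Schwarz inequality $g_{\nabla f}(\nabla f,v)\le F(\nabla f)F(v)=F(v)$ to $v=\dot\sigma$ for any smooth curve $\sigma$ from $x$ to $y$, integrating $df$ along $\sigma$, and taking the infimum over $\sigma$. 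Specialised to $x=\gamma(s)$ and $y=\gamma(t)$ with $s<t$, this yields $d_{F}(\gamma(s),\gamma(t))\ge t-s$, while the arclength of $\gamma|_{[s,t]}$ provides the reverse inequality. Hence $\gamma$ realises Finsler distance between its endpoints and is a minimising unit-speed Finslerian geodesic.

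Finally, the parallel-hypersurface statement in (3) is immediate: the flow $\phi_{t}$ of $\nabla f$ satisfies $\phi_{t}(f^{-1}(c))=f^{-1}(c+t)$ by the linearity just established, so distinct level sets are carried onto each other by the unit-speed geodesic flow with velocity $\nabla f$, i.e.\ they are parallel along $\nabla f$. The main delicate point is the one-sided Lipschitz inequality used to promote the integral curves to geodesics: because $d_{F}$ is non-symmetric one must ensure the inequality runs from $x$ to $y$ rather than from $y$ to $x$, which amounts to invoking Cauchy--Schwarz with $\nabla f$ locked in the fixed slot of $g_{\nabla f}$ and the variable tangent vector in the free slot.
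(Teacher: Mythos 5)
Your proof is correct, and it is essentially a self-contained expansion of what the paper handles by citation. The paper's own proof of (1) is the same regular-value-theorem observation; for (2) it simply invokes \cite[Lemma 6.2.1]{BCS}, and for the parallelism in (3) it invokes \cite[\S 4]{Tam08}, keeping only the one-line linearity computation $f(\eta(t))=t+f(\eta(0))$ that you also give. What you supply in place of those citations is precisely their content: the fundamental inequality $g_{\nabla f}(\nabla f,v)\le F(\nabla f)F(v)$ integrated along an arbitrary curve gives the one-sided estimate $f(y)-f(x)\le d_F(x,y)$, which pins $d_F(\gamma(s),\gamma(t))=t-s$ against the arclength bound and forces the integral curves to be minimizing unit-speed geodesics; the flow of $\nabla f$ then carries $f^{-1}(c)$ to $f^{-1}(c+t)$ at exactly distance $t$, which is the parallelism statement. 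Your explicit attention to the direction of the Lipschitz inequality (from $x$ to $y$, with $\nabla f$ as the reference vector of $g$) is a genuine point of care in the non-reversible setting that the paper leaves implicit in its references, and it is handled correctly. The only cosmetic quibble is that $g_{\nabla f}$ is symmetric, so the ``fixed slot'' language is really about the reference vector of the fundamental tensor rather than an argument slot; the inequality you use is the standard fundamental inequality and is stated in the right direction.
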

\begin{proof}
\begin{enumerate}[label={\upshape(\arabic*)},ref={\upshape\arabic*}]
\item A Finsler distance $f$ on $M$ by its definition means that $F(\nabla f)=1$. Then,  $f$ has no critical  points and the rest follows directly.
\item  This follows from \cite[Lemma 6.2.1]{BCS}. 
\item It follows from \cite[\S 4]{Tam08}. That is, $d_{F}(f^{-1}(t), f^{-1}(s))= s-t ,\, t<s$. Consequently, $f$ is linear along the integral curves of $\nabla f$. Indeed, integrating both sides of $\dot{\eta}(t)= \nabla f \circ \eta$, yields $f({\eta}(t))=t + f(\eta(0)).$
\end{enumerate}\vspace{-0.6 cm}
\[\qedhere\]
\end{proof}
\begin{remark}
In the Finsler context, there is a slight difference in the definition of parallel hypersurfaces. This is due to the nonsymmetry of the distance $d_F$, namely, if a hypersurface $f^{-1}(t)$ is parallel to $f^{-1}(s)$, it does not mean that $f^{-1}(s)$ is parallel to $f^{-1}(t)$, unless the Finsler metric is reversible, cf.\cite{Sabau}.
\end{remark}

\begin{remark} \cite{Papadopoulos} Let  $T$ be a distribution on 
 a compact subset  $\Omega$ of $M$, then, $\forall z \in~\mathbb{Z}^{+}$, define the distributional derivative  
 \[\frac{d^{z}T}{dx^{z}}(\phi)= (-1)^{z} \,T(\phi^{(z)}), \,\, \forall  \phi \in \mathcal{D}(\Omega).\]
  That is, 
\begin{equation}\label{distributional derivative}
\int_{\Omega}  D^{(z)}\,T(\phi) d\mu = (-1)^{z}\int_{\Omega}  T(D^{(z)}\phi) \,d\mu,\, \, \forall \phi \in \mathcal{D}(\Omega).
\end{equation}
\end{remark}
\begin{lemma}\label{Lap.weak.cgt}
$\, \Delta b_{\eta, t} \longrightarrow \Delta b_{\eta}$ as $t \longrightarrow \infty$ in the distributional sense.
\end{lemma}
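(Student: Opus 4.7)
The plan is to unpack the claim via the weak formulation \eqref{weaklapdef}, which reduces distributional convergence $\Delta b_{\eta,t}\to\Delta b_\eta$ to showing that for every test function $\phi\in \cC_c^\infty(M)$,
\begin{equation*}
\int_M d\phi(\nabla b_{\eta,t})\,d\mu\;\longrightarrow\;\int_M d\phi(\nabla b_\eta)\,d\mu\qquad\text{as }t\to\infty.
\end{equation*}
I would establish this integral convergence through the dominated convergence theorem. A uniform dominating function is immediate from Proposition~\ref{properties of Busemann function}(2): since each $b_{\eta,t}$ and $b_\eta$ is $1$-Lipschitz, one has $F(\nabla b_{\eta,t})\le 1$ and $F(\nabla b_\eta)\le 1$ almost everywhere on $K:=\mathrm{supp}(\phi)$, so that $|d\phi(\nabla b_{\eta,t})(x)|\le F^{*}(x,d\phi(x))\cdot F(x,\nabla b_{\eta,t}(x))\le F^{*}(x,d\phi(x))$, which is bounded on the compact set $K$ and independent of $t$.

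The core of the argument is the pointwise almost everywhere convergence $\nabla b_{\eta,t}(x)\to \nabla b_\eta(x)$ on $K$. By Rademacher's theorem, $b_\eta$ is differentiable almost everywhere, and I would verify the gradient convergence at each such regular point. At a regular point $x$, the no-conjugate-points hypothesis yields a unique asymptotic ray $\sigma_x$ from $x$ to $\eta$, obtained as a limit of the unit-speed minimizing segments $\sigma_{x,t}$ from $x$ to $\eta(t)$; for $t$ large enough that $x$ lies outside the cut locus of $\eta(t)$, the function $b_{\eta,t}$ is smooth near $x$ and $db_{\eta,t}(x)$ is determined by $\dot\sigma_{x,t}(0)$, while $db_\eta(x)$ is determined by $\dot\sigma_x(0)$. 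The convergence $\dot\sigma_{x,t}(0)\to\dot\sigma_x(0)$ from the asymptote construction, combined with the continuity of the Legendre transform $J^{*}$ in its second argument, delivers $\nabla b_{\eta,t}(x)\to \nabla b_\eta(x)$ pointwise a.e. on $K$. Applying the dominated convergence theorem and the weak formulation \eqref{weaklapdef} then concludes the proof.

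The main obstacle is the second step: the Finsler gradient $\nabla u=J^{*}(du)$ depends non-linearly on $du$, so the uniform convergence $b_{\eta,t}\to b_\eta$ on compact sets supplied by Proposition~\ref{properties of Busemann function}(3) does not automatically transfer to convergence of the gradients. One is forced to pass through the geometric description of $\nabla b_{\eta,t}$ in terms of the minimizing geodesic from $x$ to $\eta(t)$ and of $\nabla b_\eta$ in terms of the asymptotic ray, and to exploit the absence of conjugate points both to guarantee uniqueness of the asymptote at regular points of $b_\eta$ and to ensure smoothness of the underlying geodesic segments near $x$ for all sufficiently large $t$.
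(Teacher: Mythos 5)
Your proof is correct in outline but takes a genuinely different, and considerably heavier, route than the paper's. The paper never touches gradients: it treats $\Delta b_{\eta,t}$ as the distribution $\phi\mapsto\int_\Omega b_{\eta,t}\,\Delta\phi\,d\mu$, throwing both derivatives onto the test function, so that the uniform convergence $b_{\eta,t}\to b_\eta$ on compact sets (Proposition~\ref{properties of Busemann function}(3)) finishes the proof in one line. What your version buys is fidelity to the weak formulation \eqref{weaklapdef}, which integrates by parts only once; this is not a pedantic point, because Shen's Laplacian is non-linear, so the pairing $\int b_{\eta,t}\,\Delta\phi\,d\mu$ used in the paper is not literally the adjoint of \eqref{weaklapdef}, whereas your statement $\int_M d\phi(\nabla b_{\eta,t})\,d\mu\to\int_M d\phi(\nabla b_\eta)\,d\mu$ is exactly what $\Delta b_{\eta,t}\to\Delta b_\eta$ means for the weak Shen Laplacian. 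The price is the pointwise a.e.\ convergence of gradients, which the paper's argument avoids entirely.

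Two soft spots in that step should be repaired. First, uniqueness of the asymptote from $x$ is not a consequence of the no-conjugate-points hypothesis (the paper stresses that the asymptote relation is badly behaved in Finsler geometry); what forces uniqueness at your regular points is the differentiability of $b_\eta$ there, which pins the initial velocity of every asymptote from $x$ to $-\nabla b_\eta(x)$, exactly as in Corollary~\ref{asymptote is unique}. Second, the asymptote construction only yields $\dot\sigma_{x,t_i}(0)\to\dot\sigma_x(0)$ along a subsequence; to get convergence of the full family $\dot\sigma_{x,t}(0)$ you should invoke compactness of the indicatrix $I_xM$ and the fact that every subsequential limit is the initial vector of some asymptote from $x$, hence equals $-\nabla b_\eta(x)$ at a point of differentiability. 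With these two repairs (and the harmless adjustment that the dominating function should be $\max\{F^*(d\phi),F^*(-d\phi)\}$, since $F$ need not be reversible), your dominated convergence argument goes through.
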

\begin{proof}
Let $\Omega \subset M$ be a compact subset. 
Since  $b_{\eta , t}$ is a continuous function, then it is locally integrable and therefore it is a distribution on $\Omega$. Let $\phi$ be a test function, then $\Delta \phi $ is a test function as well. Thus,
\begin{equation*}
 \int_{\Omega}  (\Delta b_{\eta ,t} )(\phi)\,d\mu =
\int_{\Omega}  b_{\eta ,t} (\Delta \phi)\,d\mu .
\end{equation*}
Now, taking limit as $t \longrightarrow  \infty$   yields
 \begin{equation*}
\int_{\Omega}  b_{\eta } (\Delta \phi)\,d\mu  = \int_{\Omega} (\Delta  b_{\eta })(\phi)\,d\mu.
 \end{equation*}
 Hence,  $\Delta b_{\eta, t} \longrightarrow \Delta b_{\eta}$ in the distributional sense.
\end{proof}
For the corresponding result in Riemannian geometry, one can check \cite{Knieper, Sormani}. 
 \begin{definition} Let $(M,F)$ be a noncompact forward complete Finsler manifold. 
 Let $\eta: [0, \infty ) \longrightarrow M$ be a ray. Another ray $\zeta : [0, \infty ) \longrightarrow M$ is said to be asymptotic to $\eta$ if there exists a sequence $\{t_{i}\}_{i \in \mathbb{N}} \subset [0,\infty[ $ and a sequence $\{\zeta_{i}\}_{i\in \mathbb{N}}$ such that
\[\lim_{i \rightarrow \infty} \zeta_{i}(t) = \zeta(t),\, \forall\, t\geq0 , \quad \lim_ {i \rightarrow \infty} t_{i} = \infty,\] 
 and 
 %\vspace{-0.3cm}
 \[\zeta_{i} : [0, d_{F}(\zeta(0), \eta(t_{i})] \longrightarrow M \]
is a sequence of minimal geodesics from $\zeta(0)$ to $\eta (t_{i})$ (cf.~\cite{Shah03} for the Riemannian case and \cite{Sabau, Osplit} for the Finsler one). 
 \end{definition}
 The following figure explains the definition:
 \begin{center}
\includegraphics[scale=0.53]{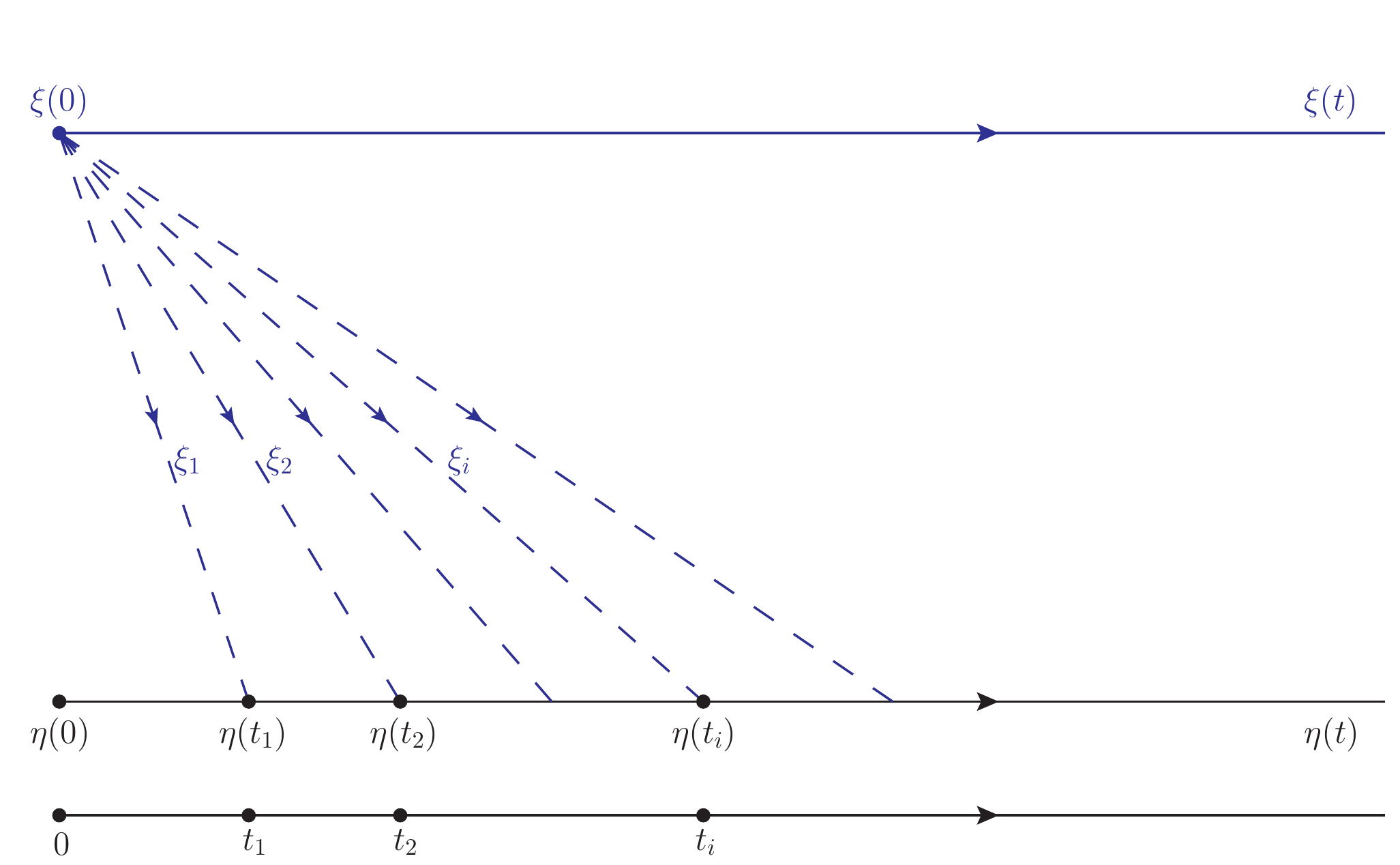}
\end{center}
The following result is a generalization of \cite[Proposition 7.3.8]{Pet16} from the  Riemannian to the Finsler context. 
 \begin{proposition}\label{asymptotic rays}
Let $(M,F)$ be a forward complete Finsler manifold. If a ray ${\zeta}$ emanating from $p:=\zeta(0)$ is asymptotic to $\eta$, then their Busemann functions are related by
\begin{equation}\label{two asymptotic rays}
 b_{\eta}({\zeta}(t)) = b_{\eta}(p) +
{b}_{\zeta}({\zeta}(t)) = b_{\eta}(p) - t.
\end{equation}
Consequently,
\begin{equation}\label{equivalence rel}
 b_{\eta}(x)-b_{\zeta}(x) \leq b_{\eta}(p).
\end{equation}
\end{proposition}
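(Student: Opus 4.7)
The plan is to first establish the key identity $b_{\eta}(\zeta(t)) = b_{\eta}(p) - t$ by passing to the limit along the defining sequence of approximating geodesics, and then to obtain \eqref{equivalence rel} from a two-step triangle-inequality estimate.

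First I fix $t \geq 0$ and exploit the fact that each $\zeta_{i}$ is a unit-speed \emph{minimizing} geodesic from $p$ to $\eta(t_{i})$. Since
\[
d_{F}(p,\eta(t_{i})) \geq d_{F}(\eta(0),\eta(t_{i})) - d_{F}(p,\eta(0)) = t_{i} - d_{F}(p,\eta(0)) \longrightarrow \infty,
\]
for $i$ large enough we have $t \leq d_{F}(p, \eta(t_{i}))$, so minimality yields
\[
d_{F}(\zeta_{i}(t), \eta(t_{i})) = d_{F}(p, \eta(t_{i})) - t.
\]
Subtracting $t_{i}$ from both sides rewrites this as $b_{\eta, t_{i}}(\zeta_{i}(t)) = b_{\eta, t_{i}}(p) - t$.

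Next I pass to the limit as $i \to \infty$. Because $\zeta_{i}(t) \to \zeta(t)$, the points $\{\zeta_{i}(t)\}_{i \geq i_{0}}$ lie in some compact neighbourhood $K$ of $\zeta(t)$. On $K$, Proposition \ref{properties of Busemann function}(3) combined with $t_{i} \to \infty$ gives uniform convergence $b_{\eta, t_{i}} \to b_{\eta}$. Using the decomposition
\[
|b_{\eta, t_{i}}(\zeta_{i}(t)) - b_{\eta}(\zeta(t))| \leq \sup_{K}|b_{\eta, t_{i}} - b_{\eta}| + |b_{\eta}(\zeta_{i}(t)) - b_{\eta}(\zeta(t))|,
\]
the first term vanishes by uniform convergence on $K$ and the second by the $1$-Lipschitz continuity of $b_{\eta}$ (Proposition \ref{properties of Busemann function}(2)). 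Similarly $b_{\eta, t_{i}}(p) \to b_{\eta}(p)$. Taking limits in the identity above gives $b_{\eta}(\zeta(t)) = b_{\eta}(p) - t$. Combining this with $b_{\zeta}(\zeta(t)) = -t$ from Proposition \ref{properties of Busemann function}(1) yields \eqref{two asymptotic rays}.

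For \eqref{equivalence rel}, I invoke the (asymmetric) triangle inequality in the specific order $d_{F}(x, \eta(t)) \leq d_{F}(x, \zeta(s)) + d_{F}(\zeta(s), \eta(t))$, subtract $t$, and let $t \to \infty$ first. The term $d_{F}(\zeta(s), \eta(t)) - t$ converges to $b_{\eta}(\zeta(s))$ by the very definition of the Busemann function, so
\[
b_{\eta}(x) \leq d_{F}(x, \zeta(s)) + b_{\eta}(\zeta(s)) = \bigl(d_{F}(x, \zeta(s)) - s\bigr) + b_{\eta}(p),
\]
where I used \eqref{two asymptotic rays} in the last step. Letting $s \to \infty$ and invoking the definition of $b_{\zeta}(x)$ gives $b_{\eta}(x) - b_{\zeta}(x) \leq b_{\eta}(p)$.

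The main obstacle is the joint limit in the second step: one has to pass to the limit of $b_{\eta, t_{i}}$ evaluated at the \emph{moving} points $\zeta_{i}(t)$, which is exactly where the uniform convergence of $b_{\eta, t}$ on compact sets (rather than mere pointwise convergence) becomes crucial. A subsidiary delicate point is that, because $d_{F}$ is asymmetric, the triangle inequality for \eqref{equivalence rel} must be set up so that the quantity $d_{F}(x, \zeta(s)) - s$—which genuinely converges to $b_{\zeta}(x)$—appears, rather than its reverse $d_{F}(\zeta(s), x) - s$.
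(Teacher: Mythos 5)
Your proof is correct and follows essentially the same route as the paper's: split $d_{F}(p,\eta(t_i))$ at the intermediate point $\zeta_i(t)$ using minimality of $\zeta_i$, pass to the limit, and then derive \eqref{equivalence rel} from the asymmetric triangle inequality arranged in the same order. You are in fact more careful than the paper at the one delicate step---justifying the limit of $b_{\eta,t_i}$ at the moving points $\zeta_i(t)$ via uniform convergence on compacta plus Lipschitz continuity---which the paper passes over silently; the only nitpick is that, because $d_{F}$ is asymmetric, your lower bound for $d_{F}(p,\eta(t_i))$ should involve $d_{F}(\eta(0),p)$ rather than $d_{F}(p,\eta(0))$.
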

\begin{proof}
Let $\zeta$  be an asymptote to $\eta$ from $p$. Then, there exists a sequence $\{t_{i}\}_{i \in \mathbb{N}} \subset [0,\infty[$ and a sequence $\{\zeta_{i}\}_{i\in \mathbb{N}}$ of minimal geodesics from $p$ to $\eta(t_i)$ such that \[\lim_{i \rightarrow \infty} \zeta_{i}(t) = \zeta(t),\, \forall\, t\geq0  \text{ and } \lim_ {i \rightarrow \infty} t_{i} = \infty.\]
\begin{align*}
 b_\eta(p)& := \lim_{i \rightarrow \infty}
 \big(d_{F}(p,\eta(t_i))-t_i \big)\\
 & = \lim_{ i \rightarrow \infty}
\big( d_{F}(p,\zeta_i(s))+d_{F}(\zeta_i(s),\eta(t_i))-t_i \big)\\
& =  d_{F}(p,{\zeta}(s))+\lim_{i\rightarrow \infty}
\big(d_{F}({\zeta}(s),\eta(t_i))-t_i \big)=  s + b_{\eta}(\zeta(s)).
\end{align*}
%%%%%%%%%%%%%%%%%%%%%%%%%%%%%%%%%%
That is, $ b_{\eta}(q)-b_{\eta}(p)= - c, \text{ where } \zeta(c)=q,\, c\geq 0$.
Now, we  use Eqn.~(\ref{two asymptotic rays}) to prove (\ref{equivalence rel}) as indicated below. From the triangle inequality for the nonsymmetric distance $d$, we have
\begin{align*}
d_{F}(x,\eta(s)) - s &\leq  d_{F}(x,\zeta(t)) +
d_{F}(\zeta(t),\eta(s)) - s\\ 
&=  d_{F}(x,\zeta(t)) - t +  d_{F}(\zeta(0),\zeta(t)) +
d_{F}(\zeta(t),\eta(s)) -s .
\end{align*}
Now, let $s \longrightarrow  \infty$ in the above inequality, we get
$$b_\eta(x) \leq  d_{F}(x,\zeta(t)) - t + d_{F}(p,\zeta(t)) +
b_\eta(\zeta(t)).$$
Using (\ref{two asymptotic rays}), we get  
 $$b_\eta(x) \leq  d_{F}(x,\zeta(t)) - t + d_{F}(p,\zeta(t)) + b_\eta(p) - t.$$
Therefore,
$$b_\eta(x) \leq  d_{F}(x,\zeta(t)) - t + b_\eta(p) .$$
Taking the limit $t \longrightarrow  \infty$ of both sides, yields (\ref{equivalence rel}).
\end{proof}
Actually, Equation (\ref{two asymptotic rays}) represents a generalization of \cite[Corollary 3.9]{egloff}.

 Now,  we  show that the asymptotes are unique. Consequently, the Busemann functions are  distance functions, i.e., $F(\nabla b_{\zeta})=1$. %Hence, the Busemann function associated to the ray $\zeta$ can be interpreted as a distance function from $\zeta(\infty)$.
\begin{corollary}\label{asymptote is unique} 
Let $(M,F)$ be a forward complete Finsler manifold. Let  $\eta :[0,\infty] \longrightarrow M $ be a ray and $p \in M$. Then there exists a unique ray $\zeta(s): = \exp_{p}(sv)$ emanating from $p$ that is asymptotic to $\eta$, where $v$ is the initial velocity of  $\zeta$ (cf. \cite{Osplit,  Sabau}).
\end{corollary}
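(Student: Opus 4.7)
The plan is to split the corollary into an existence statement and a uniqueness statement. Existence will come from a standard compactness argument on the indicatrix, and uniqueness from Proposition~\ref{asymptotic rays} combined with a Legendre-duality characterisation of the steepest-descent direction (falling back on the cited literature where regularity is not available by hand).

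For existence, I fix any sequence $t_i\to\infty$ and apply the forward Hopf--Rinow theorem \cite[\S 6.6]{BCS} to obtain unit-speed minimizing geodesics $\zeta_i:[0,\ell_i]\to M$ from $p$ to $\eta(t_i)$ with $\ell_i:=d_F(p,\eta(t_i))$. The triangle inequality $\ell_i\geq t_i-d_F(\eta(0),p)$ forces $\ell_i\to\infty$. The initial velocities $v_i:=\dot\zeta_i(0)$ lie on the indicatrix $I_pM$, which is a compact smooth hypersurface of $T_pM$, so a subsequence $v_{i_k}$ converges to some $v\in I_pM$, and I set $\zeta(s):=\exp_p(sv)$. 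Continuity of $\exp_p$ gives $\zeta_{i_k}(s)=\exp_p(sv_{i_k})\to\zeta(s)$ for each $s\geq 0$; for fixed $s$ and all large $k$ with $\ell_{i_k}>s$ the minimality of $\zeta_{i_k}|_{[0,s]}$ yields $d_F(p,\zeta_{i_k}(s))=s$, and continuity of the Finsler distance in the limit produces $d_F(p,\zeta(s))=s$, so $\zeta$ is a unit-speed globally minimizing geodesic, i.e., a ray. By construction the sequence $\{\zeta_{i_k}\}$ witnesses the definition making $\zeta$ asymptotic to~$\eta$.

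For uniqueness, suppose $\zeta_1,\zeta_2$ are both asymptotes from $p$ with initial velocities $v_1,v_2\in I_pM$. Proposition~\ref{asymptotic rays} gives $b_\eta(\zeta_j(s))=b_\eta(p)-s$ for $j=1,2$, so each $\zeta_j$ realises the maximal rate of decrease of the $1$-Lipschitz Busemann function $b_\eta$ issuing from $p$. At the infinitesimal level this forces $db_\eta(v_j)=-1$ together with $F(v_j)=1$; combined with the Lipschitz bounds $-F(-w)\leq db_\eta(w)\leq F(w)$ on every $w\in T_pM$, this selects the unique steepest-descent direction of $db_\eta(p)$ on the (strictly convex, smooth) indicatrix $I_pM$ via the Legendre duality in \eqref{graddef}. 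Hence $v_1=v_2$ provided $b_\eta$ is differentiable at $p$; this differentiability is the content of the forthcoming smoothness result for AHF-manifolds in Section~4, and in the general forward complete Finsler setting one appeals to the detailed argument of \cite{Osplit,Sabau}.

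The principal obstacle is the uniqueness half: it hinges on pointwise regularity of $b_\eta$ at $p$ sufficient to invoke the Legendre-dual characterisation of steepest descent, and this regularity is not automatic in a merely forward complete Finsler manifold. The existence half is otherwise routine once forward Hopf--Rinow and compactness of the indicatrix are in hand.
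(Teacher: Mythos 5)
Your uniqueness argument is, in substance, the paper's own proof: the paper likewise takes the identity $b_{\eta}(\zeta(s))=b_{\eta}(p)-s$ from Proposition~\ref{asymptotic rays}, differentiates at $s=0^{+}$, and uses the gradient formula \eqref{grad} to conclude $\nabla b_{\eta}(p)=-\dot\zeta(0)$, so that the asymptote is pinned down by $\nabla b_{\eta}(p)$. Where you genuinely add value is the existence half: the paper's proof begins ``let $\zeta$ be an asymptote to $\eta$ from $p$'' and never constructs one (existence is delegated to \cite{Osplit,Sabau}), whereas your forward Hopf--Rinow plus compactness-of-the-indicatrix argument is the standard construction and is correct as written, including the verification that the limit is a globally minimizing unit-speed geodesic and that the subsequence $\{\zeta_{i_k}\}$ witnesses the asymptote relation. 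You are also more honest than the paper about the regularity issue: the paper's step ``$b_{\eta}$ is differentiable almost everywhere, hence one can differentiate both sides'' does not justify differentiability of $b_{\eta}$ \emph{at the point} $p$ as a function on $M$ (only the composite $s\mapsto b_{\eta}(\zeta(s))$, which is affine, is trivially differentiable), and that pointwise differentiability is exactly what the chain rule and the Legendre-dual step require. You flag this explicitly and defer to the literature; the paper glosses over it.

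One further caveat, which applies equally to the paper's version: to conclude $v_1=v_2$ from $db_{\eta}(p)(v_j)=-1$ and $F(v_j)=1$ via strict convexity of $I_pM$, you need to know that $-1$ is actually the \emph{minimum} of the linear functional $db_{\eta}(p)$ on $I_pM$, i.e.\ $F^{*}\bigl(-db_{\eta}(p)\bigr)=1$; otherwise $\{w\in I_pM : db_{\eta}(p)(w)=-1\}$ is a non-extremal level slice and need not be a single point. In the non-reversible setting the Lipschitz bound only gives $db_{\eta}(w)\geq -F(-w)$, which does not by itself rule out values below $-1$ on the indicatrix. This is the same implicit step as the paper's jump from $g_{\nabla b_{\eta}}(\nabla b_{\eta}(p),v)=-1$ to $\nabla b_{\eta}(p)=-v$, so it is a shared, fixable gap rather than a defect specific to your argument, but it deserves a sentence (e.g.\ deriving $F^{*}(-db_{\eta}(p))\leq 1$ from $b_{\eta}(p)-b_{\eta}(y)\leq d_F(p,y)$) before invoking strict convexity.
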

\begin{proof}
Let $\zeta$  be an asymptote to $\eta$ from $p$. 
From Proposition \ref{properties of Busemann function} (2), the Busemann function is differentiable almost everywhere,  and hence one can differentiate both sides of Eqn.~ (\ref{two asymptotic rays}) and get
$$\frac{d}{ds} ( b_\eta({\zeta}(s)) =-1 .$$
Using (\ref{grad}), we have 
\[\frac{d}{ds} ( b_\eta({\zeta}(s))|_{s=0^{+}} = g(\nabla b_{\eta}({\zeta}(s)),
\dot{\zeta}(0)).\] Hence,
\begin{equation}\label{gradbv=1}
\nabla b_{\eta}(p)  = - \dot{\zeta}(0)=-v.
\end{equation}
Therefore, there is only one asymptotic geodesic
to $\eta$ emanating from $p$, namely 
$\zeta(s)= \exp_{p}(s \nabla b_{\eta}(p)).$
%\[\qedhere\]
\end{proof}

%\begin{lemma}\textit{\lq \lq Strong comparison principle"} \cite[Lemma 5.4]{StrongPrinciple} \label{SCP} Let $\Omega$ be a bounded open set in $\mathbb{R}^{n}$.Let $u,\,v \in H^{1}(\Omega) \cap C(\Omega)$ and $\Lambda \in \mathbb{R}$. Suppose that\[ u \geq v, \,\,\,  -\Delta u +\Lambda u \geq  -\Delta v +\Lambda v\,\, \text{ in } \Omega.\] If $u(x_{0})=v(x_{0}),$ then $u \equiv v$ in the component of $\Omega$ containing $x_{0}$.\end{lemma}
\section{Busemann functions in asymptotically harmonic manifolds}
 It should be noted that for a complete simply connected Riemannian manifold of nonpositive sectional curvature, the asymptotic relation between two rays is an equivalence relation \cite{Pet16}. However, imposing conditions on the flag curvature of a Finsler manifold does not suffice to make it an equivalence relation. This is because the asymptotic relation is neither transitive nor symmetric \cite{ShiBanktesh}. Even-though, we prove the following. 
\begin{theorem}\label{Thm: two asymptotic rays}
In an AHF-manifold, two rays are asymptotic if and only if the corresponding Busemann functions agree upto a constant. Moreover, Eqn.~$(\ref{equivalence rel})$ represents an equivalence relation.   
\end{theorem}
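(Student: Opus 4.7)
The plan is to establish the biconditional in two directions and then observe that ``$b_\eta-b_\zeta$ is a constant'' is visibly an equivalence relation, which automatically upgrades (\ref{equivalence rel}) to an equality on each asymptotic class. For the direct implication, assume $\zeta$ is a ray asymptotic to $\eta$ with $p := \zeta(0)$. Proposition \ref{asymptotic rays} provides $b_\eta(x)-b_\zeta(x) \le b_\eta(p)$ for every $x \in M$, and at $x=p$ equality holds since $b_\zeta(p)=0$ by Proposition \ref{properties of Busemann function}(1). Thus $u := b_\eta-b_\zeta-b_\eta(p)$ is a non-positive function with interior maximum $u(p)=0$. In the AHF setting Theorem \ref{smooth bv} guarantees that $b_\eta$ and $b_\zeta$ are smooth, while the identification $\Delta r = \Pi_{\nabla r}$ between Shen's Laplacian and the Finsler mean curvature, combined with the fact that the level sets of a Busemann function are horospheres whose mean curvature equals the constant $h$, yields $\Delta b_\eta \equiv h \equiv \Delta b_\zeta$. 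A strong maximum principle for Shen's Laplacian applied to the two solutions $b_\eta$ and $b_\zeta+b_\eta(p)$ of $\Delta b = h$ then forces $u\equiv 0$, so $b_\eta$ and $b_\zeta$ agree up to the constant $b_\eta(p)$.

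For the converse, suppose $b_\eta-b_\zeta \equiv c$ on $M$ for some constant $c$. Differentiating gives $db_\eta = db_\zeta$, and hence via the Legendre transform (\ref{graddef}) one has $\nabla b_\eta \equiv \nabla b_\zeta$ on $M$. Applying Corollary \ref{asymptote is unique} to $\zeta$ at $p=\zeta(0)$ yields $\nabla b_\zeta(p) = -\dot\zeta(0)$, so $\nabla b_\eta(p) = -\dot\zeta(0)$. Applying Corollary \ref{asymptote is unique} now to $\eta$ at $p$, the unique asymptote to $\eta$ emanating from $p$ is the geodesic $s\mapsto \exp_p(-s\,\nabla b_\eta(p)) = \exp_p(s\,\dot\zeta(0)) = \zeta(s)$, so $\zeta$ is asymptotic to $\eta$. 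Combining the two directions, the asymptotic relation coincides with the relation ``the Busemann functions agree up to a constant'', which is reflexive, symmetric, and transitive by inspection; hence the asymptotic relation inherits these three properties and (\ref{equivalence rel}) becomes an equality $b_\eta(x)-b_\zeta(x)=b_\eta(p)$ on each asymptotic class.

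The principal obstacle is the strong-maximum-principle step in the forward direction, since Shen's Laplacian is a genuinely nonlinear second-order elliptic operator and one cannot directly interpret $\Delta(b_\eta-b_\zeta)$ as a difference $\Delta b_\eta - \Delta b_\zeta$. The resolution is to linearise about the two smooth solutions: both $b_\eta$ and $b_\zeta+b_\eta(p)$ have unit Finsler gradient, so the linearisation of $\Delta$ along a path joining them is a linear uniformly elliptic operator with positive-definite coefficient matrix $g^{ij}(\cdot,\nabla b)$ (cf.\ (\ref{shenlapdef})), to which the classical linear strong maximum principle applies; this forces $u\equiv 0$ from the interior maximum $u(p)=0$. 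A secondary technical care is to confirm that (\ref{equivalence rel}) is meaningful in both senses (i.e.\ $b_\eta$ and $b_\zeta$ are genuinely smooth and not merely weak solutions), but this is precisely the content of Theorem \ref{smooth bv} in the AHF setting.
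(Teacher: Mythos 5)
Your proof is correct and follows essentially the same route as the paper: the forward direction rests on the fact that $b_\eta-b_\zeta$ attains its maximum at $p$ (via Proposition \ref{asymptotic rays}) together with a strong comparison/maximum principle for the nonlinear Shen Laplacian applied to two solutions of $\Delta b=h$ — the paper simply cites the Ge--Shen strong comparison principle, which packages exactly the linearisation you describe, and then globalises from a bounded neighbourhood to all of $M$ by an open--closed connectedness argument. The only substantive difference is that you spell out the converse direction explicitly through $db_\eta=db_\zeta$ and Corollary \ref{asymptote is unique}, whereas the paper leaves that direction and the equivalence-relation claim as an easy remark.
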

\begin{proof} Let $\zeta$ be a ray asymptotic to ${\eta}$ starting from $p$. Then, it is clear, from (\ref{equivalence rel}) that the function $(b_{\eta} -b_{\zeta})$ attains its maximum at $p$. Let $\Omega$ be a bounded open subset of $M$ containing $p$. Now, applying the \textit{\lq \lq strong comparison principle"} \cite[Lemma 5.4]{StrongPrinciple}  for $u=b_{\zeta}(x) + b_{\eta}(p),\,\,v=b_{\eta}(x)$ and $\Lambda =0$, yields that \begin{equation}\label{eq.rel.}
b_{\zeta}(x) - b_{\eta}(x)=c, 
\end{equation}
where $c=  b_{\eta}(p)$ is a constant and $x $ lies in the component of $\Omega$ containing $p$, say  $x \in U$.
This is because $u \geq v \, \text{ in } \Omega$ and  $\Delta (b_{\zeta}(x) + b_{\eta}(p))= \Delta b_{\zeta}(x) =h,\, \Delta b_{\eta}(x)=h $ as $(M,F, d\mu)$ is an AHF-manifold.
Note that the set $$A:=\{z\in U \,|\,b_{\zeta}(z) - b_{\eta}(z)=c \}$$ is a non-void open bounded subset of $\Omega$. Meanwhile, it is clear that $A$ is a closed set. But our base manifold $M$ is connected, therefore $A$ is the whole $M$.
 One can easily show that the relation~(\ref{eq.rel.}), $\eta \approx \zeta \iff b_{\zeta}(x) - b_{\eta}(x)=  c,$ is an equivalence relation. %It is symmetric, indeed$$\eta \approx \zeta \iff b_{\zeta}(x) - b_{\eta}(x)=  c \Longrightarrow  b_{\eta}(x) - b_{\zeta}(x)=c_{1} \iff \zeta \approx \eta ,$$ where $c_{1}:= -c$. Transitivity can also be shown as follows$$\eta \approx \zeta , \, \zeta \approx \gamma \iff b_{\zeta}(x) - b_{\eta}(x)=  c_2,\, b_{\gamma}(x)-b_{\zeta}(x)= c_{3} \Longrightarrow  b_{\gamma}(x) - b_{\eta}(x)=c_{4} ,$$ where $c_{4}:= c_{2} +c_{3}$ which means that $ \eta \approx \gamma$. 
\end{proof}

\begin{corollary} The level sets  $b^{-1}_{\gamma}(t):=(b_{\gamma}(t))^{-1}$  of Busemann function are smooth closed noncompact hypersurfaces of $M$, and called limit spheres or horospheres.
\end{corollary}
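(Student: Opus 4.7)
The plan is to reduce the corollary to the statement that $b_\gamma$ is a (smooth) Finsler distance function on the AHF-manifold $M$, and then read off the three required properties—smoothness of the level hypersurface, closedness, non-compactness—from Lemma \ref{properties of Fdistance} together with the Cartan--Hadamard picture for $M$.

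The first step is to show that $F(\nabla b_\gamma)\equiv 1$ throughout $M$. By Corollary \ref{asymptote is unique}, from every $p\in M$ there is a unique asymptote $\zeta_p(s)=\exp_p(sv_p)$ to $\gamma$ with $F(v_p)=1$, and the computation in that corollary identifies $\nabla b_\gamma(p)=-v_p$; this already gives $F(\nabla b_\gamma(p))=1$ pointwise. That the assignment $p\mapsto v_p$ varies smoothly is precisely the content of Theorem \ref{smooth bv}, whose use is legitimate here because Theorem \ref{Thm: two asymptotic rays} makes the asymptotic relation an equivalence in the AHF setting. With $b_\gamma$ recognised as a smooth Finsler distance function, Lemma \ref{properties of Fdistance}(1) immediately yields that each $b_\gamma^{-1}(t)$ is a smooth hypersurface free of critical points, and closedness is automatic from the continuity of $b_\gamma$ given by Proposition \ref{properties of Busemann function}(2), since $b_\gamma^{-1}(t)$ is then the preimage of a single point.

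For non-compactness I would invoke a Cartan--Hadamard-type theorem: an AHF-manifold is forward complete, simply connected, and has no conjugate points, so the exponential map at any base point is a diffeomorphism, yielding $M\cong\R^n$ and in particular that $M$ is contractible. By Lemma \ref{properties of Fdistance}(2)--(3) the integral curves of $\nabla b_\gamma$ are unit-speed Finsler geodesics (the asymptotes to $\gamma$) and the level sets are parallel, so the gradient flow realises any two horospheres as mutually diffeomorphic. Parametrizing $M$ by the value of $b_\gamma$ exhibits a trivial foliation of $M$ by diffeomorphic copies of $H_0:=b_\gamma^{-1}(0)$ over the interval $b_\gamma(M)\subseteq\R$, hence a product structure $M\cong I\times H_0$. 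Since $M$ is contractible and $I$ is an interval, $H_0$ must itself be contractible; but a compact boundaryless manifold of positive dimension is never contractible, so $H_0$ is non-compact, and this property transfers to every other level set via the flow.

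The principal obstacle is the smoothness input in the first step: Proposition \ref{properties of Busemann function}(2) provides only almost-everywhere differentiability, so the AHF hypothesis must be used in an essential way (via Theorem \ref{smooth bv}, or by a direct argument exploiting the equivalence relation from Theorem \ref{Thm: two asymptotic rays}) in order to legitimately invoke Lemma \ref{properties of Fdistance}. A secondary technical point in the non-compactness argument is that backward completeness is not assumed, so the gradient flow may a priori run only in the direction of decreasing $b_\gamma$; this still suffices to conclude that all horospheres are mutually diffeomorphic, and the contractibility of $M$ then forces that common diffeomorphism type to be non-compact.
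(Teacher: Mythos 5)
Your proposal is correct and follows essentially the same route as the paper: recognise $b_\gamma$ as a Finsler distance function (via Corollary \ref{asymptote is unique} and the smoothness supplied by the AHF hypothesis through Theorem \ref{smooth bv}), apply Lemma \ref{properties of Fdistance}(1) for smoothness of the level sets, and derive non-compactness from the topology of the simply connected manifold. The only difference is that the paper asserts non-compactness from simple connectedness in one line, whereas you supply an explicit contractibility/product-structure argument filling that gap.
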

\begin{proof}Since, $b_{\gamma}$ is a distance function, then by Lemma \ref{properties of Fdistance} (1),   the level sets of $b_{\gamma}$ have no critical points and viz. $b_{\gamma}^{-1}(c)$ for any $c$ are smooth hypersurfaces in $M$. Moreover, when $M$ is a simply connected, level sets $b_{\gamma}^{-1}(c)$ are noncompact hypersurfaces in $M$. %and complete the proof.
\end{proof}
In flat Riemannian manifolds\cite[\S 7.3.2]{Pet16}, horospheres are just affine hyperplanes, and in the case of constant negative sectional curvature, using the Poincare model, horospheres are Euclidean spheres internally tangent to the boundary sphere, minus the point of tangency. This may not be the case in Finsler manifolds (cf.  \cite{harmonicFinsler}).  As we mentioned before, the Busemann function $b_{\gamma} $ is a distance function and $1$-Lipschitz.  Consequently, we can define an AHF-manifold in the weak sense as follows.
\begin{definition}
A forward complete simply connected Finsler $\mu$-manifold $(M, F, d\mu)$ without conjugate points  is called an AHF-manifold in the weak sense if 
 the weak Laplacian of every Busemann function is a real constant, that is $\Delta b_{\gamma} = h$, where $\Delta$ is Shen's Laplacian. 
 \end{definition}
 Another equivalent definition is the following.
 \begin{definition}
A complete simply connected Finsler $\mu$-manifold $(M, F, d\mu)$ without conjugate points  is called an AHF-manifold in the weak sense if 
 the weak forward and backward Laplacians of every Busemann function are real constant. That is,  $\rev{\Delta} b_{\overline{\eta}} =h \text{ and } {\Delta} b_{\eta} =h, \text{ where } h \in \mathbb{R}$, $\Delta$ is Shen's Laplacian  and $\rev{\Delta}$  is the Shen's Laplacian associated with  the \emph{reverse (backward) Finsler structure} $\rev{F}$ of $F$ which is defined by $\rev{F}(v):=F(-v)$.
\end{definition}
It is clear that an AHF-manifold is an AHF-manifold in the weak sense.
\begin{remark}The constant $h$ in the Riemannian case is non-negative, however, in Finsler setting may not be.\end{remark}
 \begin{proposition}For an AHF-manifold in the weak sense, the Finsler mean curvature of large geodesic spheres converges to the Finsler mean curvature of horospheres. 
 \end{proposition}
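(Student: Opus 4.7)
The plan is to express the Finsler mean curvature of a large geodesic sphere through a given point $x$ as the Shen Laplacian of an approximating function $b_{\gamma,t}$, and then to invoke the distributional convergence lemma (Lemma \ref{Lap.weak.cgt}) together with the defining property of an AHF-manifold in the weak sense.

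Concretely, I would fix $x \in M$ and a forward ray $\gamma$. For each sufficiently large $t$, let $r_t(y) := d_F(y,\gamma(t))$; since $M$ is simply connected without conjugate points, $r_t$ is a smooth Finsler distance on a neighborhood of $x$, its level sets are geodesic spheres centered at $\gamma(t)$, and triangle inequality gives $r_t(x) \geq t - d_F(\gamma(0),x) \to \infty$, so these really are geodesic spheres of arbitrarily large radius passing through $x$. By the identity $\Delta r_t(x) = \Pi_{\nabla r_t}(x)$ recorded just after the definition of Finsler mean curvature, $\Delta r_t(x)$ is precisely the mean curvature at $x$ of the sphere of radius $r_t(x)$ centered at $\gamma(t)$. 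Since $b_{\gamma,t}(y) = r_t(y) - t$ differs from $r_t$ by the constant $-t$, both $\nabla$ and $\Delta$ are unchanged and we obtain
\[
\Pi_{\nabla r_t}(x) \;=\; \Delta r_t(x) \;=\; \Delta b_{\gamma,t}(x).
\]

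Next, Lemma \ref{Lap.weak.cgt} provides $\Delta b_{\gamma,t} \to \Delta b_\gamma$ in the distributional sense as $t \to \infty$, and by the definition of an AHF-manifold in the weak sense, $\Delta b_\gamma = h$. Hence the smooth functions $\Pi_{\nabla r_t}$ converge distributionally to the constant $h$. To upgrade this to the pointwise convergence $\Pi_{\nabla r_t}(x) \to h$ required by $\Pi_\infty = \lim_{r \to \infty}\Pi_{\nabla r}(x)$, I would combine the uniform $1$-Lipschitz bound on the family $\{b_{\gamma,t}\}$ (Proposition \ref{properties of Busemann function}(2)) and its monotonicity in $t$ (Lemma \ref{lema3.2}) with the smoothness of the limiting Busemann function $b_\gamma$ established in Theorem \ref{smooth bv}. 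These uniform $C^0$ and monotonicity controls, together with standard interior regularity for the fully nonlinear elliptic Shen Laplacian, should promote distributional convergence to locally uniform convergence of $\Delta b_{\gamma,t}$ to $h$, and in particular yield the pointwise limit at $x$.

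The main obstacle is precisely this last upgrade. Distributional convergence alone is too weak for the pointwise statement demanded by the definition of $\Pi_\infty$, so the crux lies in securing uniform second-order control on $\{b_{\gamma,t}\}$ near $x$. This rests on the non-degeneracy of the Finsler Hessian along $\nabla b_\gamma$, so that Shen's Laplacian genuinely behaves as a uniformly elliptic operator on the family, and on the smoothness of $b_\gamma$ from Theorem \ref{smooth bv}; without such regularity, one only extracts a weak, measure-theoretic limit rather than the classical pointwise limit of mean curvatures claimed.
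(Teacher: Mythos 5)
Your core argument is exactly the paper's: the paper's entire proof of this proposition is the single line ``It follows from Lemma \ref{Lap.weak.cgt}'', i.e.\ identify the mean curvature of the sphere of radius $r_t(x)$ through $x$ with $\Delta b_{\gamma,t}(x)$ (valid since $b_{\gamma,t}$ and $r_t$ differ by a constant and $\Delta r_t = \Pi_{\nabla r_t}$) and pass to the limit using the distributional convergence $\Delta b_{\gamma,t} \to \Delta b_{\gamma} = h$. Where you go beyond the paper is in flagging, correctly, that the definition $\Pi_{\infty} = \lim_{r\to\infty}\Pi_{\nabla r}(x)$ asks for a pointwise limit, whereas Lemma \ref{Lap.weak.cgt} only delivers convergence against test functions; the paper simply does not address this discrepancy, so your observation is a genuine refinement rather than a deviation. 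Your proposed upgrade (uniform Lipschitz bounds, monotonicity in $t$, smoothness of $b_{\gamma}$ from Theorem \ref{smooth bv}, and interior elliptic estimates for Shen's Laplacian) is the right kind of machinery, but as written it remains a sketch: you would need uniform (in $t$) $C^{1,\alpha}$ or $C^2$ interior estimates on the family $\{b_{\gamma,t}\}$ near $x$, together with non-degeneracy of $db_{\gamma,t}$ there, before Schauder-type theory applies, and none of that is supplied by the $C^0$ bounds and monotonicity alone. Note also that invoking Theorem \ref{smooth bv} here is legitimate but reverses the paper's ordering (that theorem appears after this proposition); its proof is independent, so there is no circularity. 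In short: same route as the paper, with an honest identification of a gap that the paper leaves open and a plausible but incomplete plan for closing it.
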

 \begin{proof}
 It follows from Lemma \ref{Lap.weak.cgt}.
 \end{proof}
 \begin{theorem}\label{smooth bv}
For any ray $\eta$ in an  AHF-manifold in the weak sense, the associated Busemann function $b_{\eta}$ is  smooth. %\setminus \{x \in M | db_{\eta}(x)= 0\}$. 
\end{theorem}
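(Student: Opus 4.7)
The plan is to recognize the defining equation $\Delta b_\eta = h$ as a quasi-linear uniformly elliptic PDE for $b_\eta$ and to invoke standard elliptic regularity theory, adapted to the Finslerian setting, to upgrade $b_\eta$ all the way to $C^\infty$.

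First I would collect the a priori information on $b_\eta$. By Proposition~\ref{properties of Busemann function}(2), $b_\eta$ is $1$-Lipschitz, hence $b_\eta \in H^1_{\loc}(M)$, and the weak AHF hypothesis reads $\Delta b_\eta = h$ in the distributional sense of \eqref{weaklapdef}. Moreover, by Corollary~\ref{asymptote is unique}, at every point of differentiability one has $F(\nabla b_\eta) = 1$, so $\nabla b_\eta$ stays on the indicatrix bundle and is in particular bounded away from the zero section, where the Finsler data could fail to be smooth.

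Next I would read Shen's Laplacian off \eqref{shenlapdef} at $u = b_\eta$: it is a second-order quasi-linear operator whose principal coefficients are $g^{kl}(x,\nabla b_\eta)$ and whose lower-order coefficients likewise depend smoothly on $(x,\nabla b_\eta) \in TM_0$. Because $(g^{kl})$ is positive definite on $TM_0$ and $\nabla b_\eta$ remains on the indicatrix, the operator is locally uniformly elliptic along $b_\eta$ with locally bounded coefficients. I would then run a standard bootstrap. The first step, upgrading $b_\eta$ from Lipschitz to $C^{1,\alpha}_{\loc}$, follows from De Giorgi-Nash-Moser / Krylov-Safonov type estimates for quasi-linear uniformly elliptic operators, as developed for Shen's Laplacian in the references on Finsler elliptic regularity already cited in the paper. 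Once $\nabla b_\eta \in C^{0,\alpha}_{\loc}$, the coefficients $g^{kl}(x,\nabla b_\eta)$ become H\"older continuous, and classical Schauder theory applied to the resulting equation yields $b_\eta \in C^{2,\alpha}_{\loc}$. Iterating, namely differentiating the equation and reapplying Schauder, gains one derivative at a time, placing $b_\eta$ in $C^\infty$.

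The main obstacle is the initial jump from Lipschitz to $C^{1,\alpha}$: Shen's Laplacian is genuinely quasi-linear in the Finsler setting, so this first step cannot simply be imported from linear Riemannian theory but requires nonlinear elliptic estimates that handle the $\nabla u$-dependence of $g^{kl}$. The fact that $b_\eta$ is a distance function, $F(\nabla b_\eta) = 1$, is what makes this first step work, since it both ensures uniform ellipticity and prevents $\nabla b_\eta$ from hitting the singular locus. Once $C^{1,\alpha}$ is secured, the remaining Schauder bootstrap to $C^\infty$ is entirely routine.
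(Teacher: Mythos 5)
Your proposal is correct and follows essentially the same route as the paper: both recognize $\Delta b_\eta = h$ as a quasi-linear elliptic equation whose coefficients $g^{kl}(x,\nabla b_\eta)$ are smooth because $F(\nabla b_\eta)=1$ keeps the gradient away from the zero section, and then invoke elliptic regularity to conclude smoothness. The only difference is that the paper delegates the entire regularity upgrade to a single citation (\cite[Theorem 41]{Besse2}), whereas you spell out the bootstrap and correctly identify the Lipschitz-to-$C^{1,\alpha}$ step as the genuinely nonlinear ingredient --- a prerequisite that the paper's one-line appeal to that theorem quietly passes over.
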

\begin{proof}
Let $ b_{\eta}$ be the Busemann function associated to the ray $\eta$. We showed, in Proposition \ref{properties of Busemann function}, that $b_{\eta}$ is 1-Lipschitz. Now, assume that $\Delta b_{\eta} (x) = h,\, h \in \mathbb{R}$ in the weak sense. That is, in view of $(\ref{weaklapdef})$,
$$h \int_M \phi \,d\mu =-\int_M d\phi(\nabla b_{\eta}) \,d\mu,
 \qquad \text{for all}\ \phi \in \cC_c^{\infty}(M).$$
 In a coordinate neighborhood $\Omega$,  Shen's Laplacian $(\ref{shenlapdef})$ of $ b_{\eta}$ can be written as follows
%Shen's Laplacian $(\ref{shenlapdef})$ is fully nonlinear uniform elliptic operator:  
$$\Delta b_{\eta} (x) = A^{ij}(x, db_{\eta}{(x)})\,\, \partial_{i}\, \partial_{j} b_{\eta}(x) + B^{i}(x, db_{\eta}{(x)}) \,\,\partial_{i} b_{\eta}(x) 
, $$
where $A^{ij}(x, db_{\eta}(x)):= g^{ij}(x,\nabla b_{\eta}(x)),$ $$\,\,\,\,\,\,\quad B^{i}(x, db_{\eta}(x)):=A^{ij}(x, db_{\eta}(x))\, \partial_{j}\left(\log(\sigma_{\mu}(x)\right) +\partial_{j}(A^{ij}(x, db_{\eta}(x))). $$
It is clear that, the coefficients $A^{ij}(x, db_{\eta}(x)),\,B^{i}(x, db_{\eta}(x))$ are smooth functions for each $x \in M$ and $ db_{\eta}(x) \in T_{x}M_{0}$ since $(M, F, d\mu)$ is a $C^{\infty}$ Finsler manifold equipped with a smooth volume form $d \mu$.
Now,  $\Delta b_{\eta}- h =0$ can be written in the form $\textbf{F}(d^{2}b_{\eta})=0$ on a domain $\Omega $. Thanks to \cite[Theorem 41]{Besse2}, we conclude that $b_{\eta}$ is smooth on $\Omega$.
\end{proof}
\begin{remark}
For a straight line  $\zeta:\mathbb{R} \longrightarrow M$ in a complete Finsler manifold, we have the two associated Busemann functions $b_{\zeta}$ for the forward ray  and $b_{\overline{\zeta}}$ for the backward ray $\overline{\zeta}:= \zeta(-t),\, t \geq 0$, 
\[ b_{\overline{\zeta}}(x)= \lim_{t \rightarrow \infty}  {d_{F}(\overline{\zeta}(t),x) - t}.\]
\end{remark}
Let us recall the definition of \emph{bi-asymptote }in Finsler geometry \cite[\S 4]{Osplit}.
\begin{definition}
 We say that a straight line $\zeta:\mathbb{R} \longrightarrow M$ is bi-asymptotic to $\eta$
if $\zeta|_{[0,\infty)}$ is asymptotic to $\eta|_{[0,\infty)}$ and  $\bar{\zeta}(s)=\zeta(-s)$
is asymptotic to $\bar{\eta}$ with respect to $\rev{F}$.
\end{definition}
%Then, we have the following result. 
\begin{lemma}\label{bv+bv-equal0}
For any straight line $\eta: \mathbb{R} \longrightarrow M$ in an AHF-manifold with $h=0$, the associated  Busemann functions satisfy
\begin{equation}
b_{\eta} +b_{\overline{\eta}} = 0.
\end{equation}
\end{lemma}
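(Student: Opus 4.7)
The plan is to reduce the identity to a strong comparison principle argument, parallel to the one used in the proof of Theorem \ref{Thm: two asymptotic rays}. Write $u := b_{\eta}$ and $v := -b_{\overline{\eta}}$, so that the goal $b_{\eta}+b_{\overline{\eta}}\equiv 0$ becomes $u\equiv v$.

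First I would establish $u \geq v$ on $M$. Because $\eta$ is a straight line, the triangle inequality
$$ d_{F}(\eta(-t),x) + d_{F}(x,\eta(s)) \;\geq\; d_{F}(\eta(-t),\eta(s)) \;=\; s+t,$$
valid for all $s,t>0$, rearranges to
$$ \big(d_{F}(\eta(-t),x)-t\big) + \big(d_{F}(x,\eta(s))-s\big) \;\geq\; 0,$$
and passing to $s,t\to\infty$ yields $b_{\eta}(x)+b_{\overline{\eta}}(x) \geq 0$. Next, using that $\eta$ is globally minimizing in both directions, a direct computation gives $b_{\eta}(\eta(r))=-r$ and $b_{\overline{\eta}}(\eta(r))=r$ for every $r\in\mathbb{R}$, so $u$ and $v$ agree at every point of the line $\eta$.

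The crux is to verify that $u$ and $v$ satisfy the \emph{same} homogeneous elliptic equation. For $u=b_{\eta}$ we have $\Delta u = h = 0$ directly from the AHF hypothesis. For $v = -b_{\overline{\eta}}$ I would first record the identity $\Delta(-w)=-\overline{\Delta} w$ relating the Shen Laplacians of $F$ and $\overline{F}$; this follows from $g^{\overline{F}}_{y}=g^{F}_{-y}$ (a direct differentiation of $\overline{F}(y)^{2}=F(-y)^{2}$), the implicit characterization $g_{\nabla w}(\nabla w,\cdot)=dw(\cdot)$ of the gradient (which gives $\overline{\nabla} w = -\nabla(-w)$), and the fact that $\mathrm{div}_{\mu}$ depends on $\mu$ but not on the Finsler structure. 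Combined with $\overline{\Delta} b_{\overline{\eta}}=h=0$, this yields $\Delta v = \Delta(-b_{\overline{\eta}}) = 0$ in the weak sense.

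With $u\geq v$ on $M$, $\Delta u=\Delta v=0$, $u=v$ at every point of $\eta$, and both $u,v$ smooth by Theorem \ref{smooth bv}, the strong comparison principle (applied with $\Lambda=0$, exactly as in the proof of Theorem \ref{Thm: two asymptotic rays}) forces $u\equiv v$ on the connected component of a bounded neighborhood of any point $p\in\eta$; connectedness of $M$ then promotes this to $u\equiv v$ on all of $M$, which is the desired identity. The main obstacle is the algebraic step $\Delta(-w)=-\overline{\Delta} w$: it is elementary but requires carefully unwinding the dependence of the nonlinear Shen Laplacian on the Finsler structure, since unlike in the Riemannian case one cannot simply add Laplacians of $b_{\eta}$ and $b_{\overline{\eta}}$. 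Once this identity is in place, the remainder of the proof is a clean maximum-principle argument entirely analogous to Theorem \ref{Thm: two asymptotic rays}.
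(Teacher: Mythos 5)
Your proposal is correct and follows essentially the same route as the paper: the triangle inequality along the line gives $b_{\eta}+b_{\overline{\eta}}\geq 0$, equality holds on $\eta$ itself, and the strong comparison principle of Ge--Shen with $u=b_{\eta}$, $v=-b_{\overline{\eta}}$, $\Lambda=0$ forces the reverse inequality. The one place you go beyond the paper's (terser) argument is the explicit verification that $\Delta(-b_{\overline{\eta}})=-\rev{\Delta}b_{\overline{\eta}}=0$, so that $u$ and $v$ solve the same equation for the \emph{same} Finsler structure before the comparison principle is invoked --- a step the paper leaves implicit but which is genuinely needed because of the nonlinearity of Shen's Laplacian.
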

\begin{proof}
 The triangle inequality  gives $ b_{\eta} +b_{\overline{\eta}}\geq 0 $ which means that $ b_{\eta} \geq - b_{\overline{\eta}}.$ By direct calculations, $b_{\eta}(\eta(s)) =- b_{\overline{\eta}}(\overline{\eta}(s)) .$
 Let $\Omega$ be a bounded open set of $M$. It is easy to see that $b_{\eta},\,- b_{\overline{\eta}}\in H^{1}(\Omega) \cap C(\Omega)$. Applying the \textit{\lq \lq Strong comparison principle"} \cite[Lemma 5.4]{StrongPrinciple} by putting $u:=b_{\eta},\,v:=- b_{\overline{\eta}}$ and $\Lambda :=0$, yields that $b_{\eta} + b_{\overline{\eta}} \leq 0.$ Hence the result follows.
\end{proof}
%We shall prove that if the horospheres of M are minimal then M has the bi-asymptotic property, that is, asymptotic geodesics in M are bi-asymptotic.
\begin{proposition}\label{bi-asymptotics}
 The bi-asymptotics are unique in any AHF-manifold whose Finsler mean curvature of all horospheres  vanishes, i.e., $h=0$.
\end{proposition}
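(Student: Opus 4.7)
My plan is to derive uniqueness of bi-asymptotes through a given point directly from Corollary~\ref{asymptote is unique}, and to use Lemma~\ref{bv+bv-equal0} only to verify that the forward- and backward-asymptote conditions are compatible when $h=0$. Fix a straight line $\eta:\mathbb{R}\to M$ and a point $p\in M$, and suppose $\zeta_1,\zeta_2:\mathbb{R}\to M$ are bi-asymptotes to $\eta$ with $\zeta_i(0)=p$ for $i=1,2$.

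First I would restrict attention to the forward halves $\zeta_i|_{[0,\infty)}$. By the definition of a bi-asymptote, each of these is a forward asymptote from $p$ to $\eta|_{[0,\infty)}$, and Corollary~\ref{asymptote is unique} then pins down the initial velocity as $\dot{\zeta}_i(0)=-\nabla b_\eta(p)$. In particular $\dot{\zeta}_1(0)=\dot{\zeta}_2(0)$, so $\zeta_1$ and $\zeta_2$ agree as unit-speed geodesics on all of $\mathbb{R}$, and this gives the uniqueness.

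The role of the hypothesis $h=0$ is to guarantee that the direction prescribed by the forward-asymptote condition is also the one prescribed by the backward-asymptote condition, so the statement is not vacuous. Applying Corollary~\ref{asymptote is unique} in the reverse Finsler structure $\rev{F}$ to the $\rev{F}$-forward ray $\bar{\zeta}_i$, one obtains an analogous formula expressing $\dot{\bar{\zeta}}_i(0)=-\dot{\zeta}_i(0)$ in terms of the Busemann function $b_{\bar{\eta}}$ and the Legendre transform of $\rev{F}$ at $p$. Consistency of the two determinations reduces to the identity $db_{\bar{\eta}}=-db_\eta$, which is exactly the differentiated form of Lemma~\ref{bv+bv-equal0}; the Legendre duality between $F$ and $\rev{F}$ then translates this into the required matching identity for the gradients. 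The main obstacle I foresee is in handling this Legendre-duality step carefully, since the Finsler gradient is nonlinear and the passage between $F$-gradients and $\rev{F}$-gradients requires tracking sign conventions; the uniqueness itself is essentially immediate once Corollary~\ref{asymptote is unique} is in hand.
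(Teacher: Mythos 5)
Your proposal is correct and follows essentially the same route as the paper, whose one-line proof likewise derives the result from Lemma~\ref{bv+bv-equal0} combined with the gradient technique of Corollary~\ref{asymptote is unique}. Your added observation --- that uniqueness of the forward asymptote already pins down the bi-asymptote through a point, with $h=0$ (via $b_{\eta}+b_{\overline{\eta}}=0$ and the sign identity for the $\rev{F}$-gradient) needed only to make the forward and backward asymptote conditions compatible --- is a fair and somewhat more explicit rendering of the paper's terse argument.
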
 
\begin{proof}
It follows from Lemma \ref{bv+bv-equal0}, using the same technique of the proof of Corollary \ref{asymptote is unique}.
\end{proof}
The next result is a special case of Theorem \ref{smooth bv}. However, the proof is different.
 \begin{proposition}\label{smooth bv,h=0}
For any straight line $\eta$ in an  AHF-manifold $(M,F)$, the associated  Busemann functions $b_{\eta} \text{ and } b_{\overline{\eta}}$ are smooth function on $M$, in case  $h=~0$.
\end{proposition}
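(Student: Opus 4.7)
The plan is to leverage the special structure of the $h=0$ case in the presence of a straight line, as encoded in Lemma~\ref{bv+bv-equal0} via the identity $b_\eta + b_{\overline{\eta}} \equiv 0$, together with the uniqueness of bi-asymptotes from Proposition~\ref{bi-asymptotics}. My idea is to first promote $b_\eta$ from merely Lipschitz to $C^1$ by a purely geometric argument (avoiding the nonlinear regularity machinery used in Theorem~\ref{smooth bv}), and only then obtain $C^\infty$ as a final step.

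For the $C^1$ step, through each $x \in M$ let $\zeta_x$ denote the unique bi-asymptote with $\zeta_x(0) = x$ supplied by Proposition~\ref{bi-asymptotics}, and set $X(x) := -\dot{\zeta}_x(0)$. By Eqn.~\eqref{gradbv=1} from Corollary~\ref{asymptote is unique}, the Finsler gradient $\nabla b_\eta$ coincides with $X$ at every point where $b_\eta$ is differentiable, which is a full-measure set by Proposition~\ref{properties of Busemann function}. I would then check that $X$ is continuous: for $x_n \to x$, local compactness of the indicatrix yields a subsequential limit of $\dot{\zeta}_{x_n}(0)$; continuity of $d_F$ and of the limit definition of ``asymptote'' show that the resulting limit geodesic is again a bi-asymptote through $x$, and uniqueness forces it to equal $\zeta_x$. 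Thus $X$ is continuous and $\nabla b_\eta = X$ holds everywhere, so $b_\eta \in C^1$ with $F(\nabla b_\eta) \equiv 1$; the equality (as opposed to $\le$) follows by combining the Lipschitz bound with the unit decrease rate $\tfrac{d}{ds}b_\eta(\zeta_x(s)) = -1$ via Eqn.~\eqref{grad}.

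With $b_\eta$ a $C^1$ Finsler distance function, Lemma~\ref{properties of Fdistance} asserts that its integral curves are unit-speed geodesics, necessarily the bi-asymptotes themselves. Taking the horosphere $N := b_\eta^{-1}(0)$ and forming the flow map $\Phi(z,t) := \exp_z(-t\,X(z))$, we get $b_\eta \circ \Phi(z,t) = -t$, so smoothness of $b_\eta$ reduces to smoothness of $\Phi$, which, since $\exp$ is $C^\infty$ on $TM_0$, reduces to the smoothness of $X$ (equivalently, of $N$). The main obstacle is this bootstrap from $C^0$ to $C^\infty$ for $X$. Here we exploit that in the $h=0$ AHF-case, $b_\eta$ satisfies the quasi-linear elliptic equation $\Delta b_\eta = 0$ with coefficients smooth in $(x,db_\eta)$ (exactly as written out in the proof of Theorem~\ref{smooth bv}); once $C^1$ regularity is in hand from the preceding geometric step, standard Schauder-type bootstrapping gives $b_\eta \in C^\infty$, and then $b_{\overline{\eta}} = -b_\eta$ is smooth as well. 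The proof is genuinely different from Theorem~\ref{smooth bv} in that the crucial $C^1$ regularity is obtained \emph{geometrically} from the straight-line/bi-asymptote structure, rather than by directly invoking the regularity theorem of \cite{Besse2} at the Lipschitz level.
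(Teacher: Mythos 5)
Your route is genuinely different from the paper's, whose proof of this proposition is essentially two lines: since $\Delta b_{\eta}=0$ in the weak sense and $db_{\eta}$ does not vanish, smoothness of $b_{\eta}$ is quoted directly from the regularity theory for the nonlinear Finsler Laplacian (\cite[Proposition 4.1]{Osplit} together with \cite[Theorem 4.9 and Remark 4.10]{OShf}, harmonic functions being static solutions of the Finsler heat flow), and then $b_{\overline{\eta}}=-b_{\eta}$ by Lemma \ref{bv+bv-equal0} gives the second function for free. Your final Schauder bootstrap lands in the same PDE territory anyway, so everything your argument adds rests on the preliminary geometric derivation of $C^1$ regularity --- and that is where it has a genuine gap.

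Two problems. First, Proposition \ref{bi-asymptotics} asserts only \emph{uniqueness} of bi-asymptotes, not existence; to define $X(x)=-\dot{\zeta}_x(0)$ you need a bi-asymptote through every $x\in M$. This can be repaired from $b_{\eta}+b_{\overline{\eta}}=0$ by concatenating the forward asymptote to $\eta$ with the backward asymptote to $\overline{\eta}$, but verifying that the concatenation is a geodesic again involves the behaviour of $b_{\eta}$ at $x$ and is not automatic. Second, and more seriously, the uniqueness you invoke is circular at exactly the points that matter: both Corollary \ref{asymptote is unique} and Proposition \ref{bi-asymptotics} prove uniqueness of the asymptote through $x$ by differentiating $b_{\eta}(\zeta(s))=b_{\eta}(x)-s$ at $s=0$, i.e.\ they presuppose that $b_{\eta}$ is differentiable \emph{at that point} $x$ (the paper's own appeal to ``differentiable almost everywhere'' already glosses over this). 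At a point where $b_{\eta}$ is not yet known to be differentiable --- precisely the points you must handle to upgrade from ``differentiable a.e.'' to $C^1$ --- you therefore cannot conclude that the subsequential limit of $\dot{\zeta}_{x_n}(0)$ is independent of the subsequence, so neither the well-definedness nor the continuity of $X$ is established. Relatedly, your limit argument only shows that the limit geodesic is a unit-speed curve along which $b_{\eta}$ decreases at unit rate; identifying such ``gradient lines'' with asymptotes again requires the very uniqueness that is in question. This circularity is the reason the paper (following the Riemannian precedent of Ranjan--Shah) obtains regularity through the distributional Laplacian and elliptic theory rather than through the geometry of asymptotes.
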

\begin{proof}
We have $\Delta b_{\eta} = 0,\, \rev{\Delta} b_{\overline{\eta}}=0$. Since a harmonic function is a static solution to the heat equation and $d b_{\eta}$ does not vanish, then $b_{\eta}$ is smooth by \cite[Proposition 4.1]{Osplit} and \cite[Theorem 4.9 and Remark 4.10]{OShf}. Moreover,  $b_{\eta} + b_{\overline{\eta}} = 0$, by Lemma \ref{bv+bv-equal0}, then  $b_{\overline{\eta}} = - b_{\eta} $ is smooth.
\end{proof}
  
 \begin{proposition}\label{f=bv-}
Suppose $f$ is a distance function on an AHF-manifold. Let $\eta$ be the integral curve of $\nabla f$ starting from $p=\eta(0)$ such that $f(p)=0$. If $\Delta f = h = \Delta b_{\eta}$, then $f = b_{\overline\eta}$.
\end{proposition}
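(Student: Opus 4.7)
The plan is to show that $f \leq b_{\overline\eta}$ on $M$ with equality at $p$, and then to upgrade this to global equality via the strong comparison principle, precisely in the spirit of the proofs of Theorem \ref{Thm: two asymptotic rays} and Lemma \ref{bv+bv-equal0}. As a first step, I would use that $f$ is a Finsler distance function: Lemma \ref{properties of Fdistance}\,(2) then identifies the integral curve $\eta$ of $\nabla f$ with a unit-speed Finsler geodesic, and Lemma \ref{properties of Fdistance}\,(3) yields both the linearity $f(\eta(t)) = t$ and the parallel-hypersurface identity $d_F(\eta(s),\eta(t)) = t-s$ for $s<t$. Thus $\eta$ is a straight line, the backward extension $\overline\eta(t) := \eta(-t)$ is a legitimate backward ray, and $b_{\overline\eta}$ is well defined. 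Along $\eta$ a direct computation gives
\[
b_{\overline\eta}(\eta(s)) = \lim_{t\to\infty}\bigl(d_F(\eta(-t),\eta(s))-t\bigr) = \lim_{t\to\infty}\bigl((s+t)-t\bigr) = s = f(\eta(s)),
\]
so in particular $f(p) = b_{\overline\eta}(p) = 0$.

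Next I would extract the global bound $f \leq b_{\overline\eta}$. The condition $F(\nabla f) = 1$ is equivalent to $F^*(df) = 1$; integrating $df$ along an arbitrary curve from $y$ to $x$ therefore produces the one-sided Lipschitz estimate $f(x) - f(y) \leq d_F(y,x)$. Choosing $y = \overline\eta(t)$ and using $f(\eta(-t)) = -t$ gives
\[
f(x) + t \leq d_F(\overline\eta(t),x), \qquad \text{hence} \qquad f(x) \leq d_F(\overline\eta(t),x) - t,
\]
and passing to the limit $t \to \infty$ delivers $f(x) \leq b_{\overline\eta}(x)$ for every $x \in M$.

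Finally I would close the argument by the strong comparison principle. By hypothesis $\Delta f = h$, while the AHF condition forces $\Delta b_{\overline\eta} = h$ as well, since the level sets of $b_{\overline\eta}$ are horospheres carrying Finsler mean curvature $h$ and $\Delta r = \Pi_{\nabla r}$ for any distance function $r$. Setting $u := b_{\overline\eta}$, $v := f$, $\Lambda := 0$, we have $u \geq v$ on $M$, $u(p) = v(p) = 0$, and $\Delta u = \Delta v = h$, so \cite[Lemma 5.4]{StrongPrinciple} (invoked exactly as in Theorem \ref{Thm: two asymptotic rays} and Lemma \ref{bv+bv-equal0}) forces $u \equiv v$ on a connected open neighborhood of $p$. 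The coincidence set $\{x \in M : f(x) = b_{\overline\eta}(x)\}$ is then open by iterating this local step and closed by continuity, so by connectedness of $M$ it is all of $M$, giving $f = b_{\overline\eta}$. The main obstacle I anticipate is technical rather than structural: one must verify that $b_{\overline\eta}$ truly satisfies $\Delta b_{\overline\eta} = h$ in precisely the weak sense demanded by the strong comparison principle for the $F$-structure (not merely $\rev{\Delta} b_{\overline\eta} = h$). Once that point is settled, the rest reduces to Lipschitz bookkeeping and a direct application of results already established in the paper.
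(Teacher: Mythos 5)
Your proposal is correct and follows essentially the same route as the paper's proof: establish $f \le b_{\overline\eta}$ via the Lipschitz/level-set distance estimate with equality at $p$, then conclude by the strong comparison principle of \cite[Lemma 5.4]{StrongPrinciple}. The technical point you flag about verifying $\Delta b_{\overline\eta}=h$ for the forward structure (the stated hypothesis only gives $\Delta b_{\eta}=h$) is a genuine subtlety that the paper's own proof also passes over silently, so your attention to it is warranted rather than a defect of your argument.
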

\begin{proof}
From the definition of distance function, $F(\nabla f)=1$. It follows that, the integral curve $\eta$ of $\nabla f$ satisfies $f(\eta(t)) = t $ as $f(\eta (0))= f(p)=0$, by Lemma \ref{properties of Fdistance} (3).     
Now, fix some $s>t$ and let $x \in f^{-1}(s)$,
$$ d_{F}(\eta(t),x) \geq d_{F}(f^{-1}(t),f^{-1}(s)) = s-t, $$     
thereby
$ d_{F}(\eta(t),x) +t \geq s = f(x),\, \forall x \in f^{-1}(s)$.
Hence,
\[\lim_{t \rightarrow -\infty}
\left(d_{F}(\eta(t),x) + t \right)\geq
f(x).\]
Consequently, $b_{\overline{\eta}}(x) \geq f(x),\, \forall x \in f^{-1}(s)$ and $b_{\overline{\eta}}(p)=0=f(p)$. Now, applying \cite[Lemma 5.4]{StrongPrinciple}, we get
$b_{\overline{\eta}}(x) = f(x),\, \forall x \in f^{-1}(s)$. That is, $f = b_{\overline\eta}$.                                                    
\end{proof}
%\bigskip

%\textbf{Total Busemann function.} 
Let $(M,F)$ be a complete simply connected Finsler manifold without conjugate points. Assume that at each $x \in M$ there exists a unique line ${\zeta}$ emanating from $x:=\zeta(0)$ with $\dot{\zeta}(0)=v$. Under these conditions, each $v \in IM$ gives rise to a Busemann function $b_\zeta$, where $\zeta$ is the line just defined above. This justifies the following definition of total Busemann function \cite[\S 5]{ Shah03}.

\begin{definition}
Let $A(M)$ be the set of differentiable functions from $M$ to $\mathbb{R}$.
The total Busemann function
$B : IM \longrightarrow A(M)$ given by $(x,v) \longmapsto b_{(x,v)} := b_\zeta$.  %That is,  $B(x,v) = b_{(x,v)}:=  b_\zeta: M \longrightarrow \mathbb{R},$ where $ b_{(x,v)}(p):= b_\zeta(p)$, for all $p \in M$.
\end{definition}

\begin{proposition}\label{bd}
Let $\{\gamma _{n} \}\, _{n \in \mathbb{N}}$ be a family of unit speed geodesic rays starting from a fixed point $p \in M$ with initial velocities $\{y_{n}  \}_{n \in \mathbb{N}} \subset I_{p}M$. The sequence $\{b_{y_{n}}\}_{n \in \mathbb{N}}$ is uniformally  bounded on each compact set.
\end{proposition}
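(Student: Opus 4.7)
The plan is to exploit two facts already established about each individual Busemann function in the family: a common normalization at the base point $p$, and a uniform Lipschitz estimate. First, because every ray $\gamma_n$ emanates from the same point $p=\gamma_n(0)$, Proposition \ref{properties of Busemann function}(1) gives $b_{y_n}(p)=0$ for every $n\in\mathbb{N}$. Next, Proposition \ref{properties of Busemann function}(2) says that each $b_{y_n}$ is $1$-Lipschitz with respect to $d_F$. Combining these two ingredients via inequality~(\ref{1-Lipschitz}) applied with $y=x$ and $x=p$ yields
\[
-d_F(x,p)\;\le\;b_{y_n}(x)\;\le\;d_F(p,x)\qquad\text{for every } x\in M,\; n\in\mathbb{N}.
\]

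Now fix any compact subset $K\subset M$. The Finsler distance is continuous on $M\times M$, so the two functions $x\mapsto d_F(p,x)$ and $x\mapsto d_F(x,p)$ are continuous and hence attain finite maxima on $K$. Setting
\[
C_K:=\max\left\{\sup_{x\in K} d_F(p,x),\;\sup_{x\in K} d_F(x,p)\right\}<\infty,
\]
the displayed inequalities above give $|b_{y_n}(x)|\le C_K$ for every $x\in K$ and every $n$. Since $C_K$ depends only on the common base point $p$ and on $K$, and not on the index $n$ or the direction $y_n\in I_pM$, the family $\{b_{y_n}\}_{n\in\mathbb{N}}$ is uniformly bounded on $K$.

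There is no serious obstacle in this argument; the only mild subtlety is that $d_F$ is nonsymmetric, which is why both the forward bound $d_F(p,x)$ and the backward bound $d_F(x,p)$ must be invoked. Compactness of $K$ and continuity of each of these distances handles this point. Note that the argument does not use asymptotic harmonicity, nor the hypothesis that the rays extend uniquely to lines through $x\in M$; it is a property of any family of unit speed rays sharing a common initial point.
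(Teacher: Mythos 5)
Your argument is correct and is essentially identical to the paper's: both apply the $1$-Lipschitz inequality~(\ref{1-Lipschitz}) between $p$ and a point of the compact set, use $b_{y_n}(p)=0$, and take suprema over the compact set to get a bound independent of $n$. Your explicit handling of the nonsymmetry of $d_F$ (bounding by both $d_F(p,x)$ and $d_F(x,p)$) matches the two-sided estimate in the paper's proof.
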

\begin{proof}
Let $\Omega$ be a compact subset of M. 
Assume that $\{\gamma _{n} \}\, _{n \in \mathbb{N}}$ is a family of unit speed geodesic rays starting from a fixed point $p \in M$ with initial velocities $\{y_{n} \}\, _{n \in \mathbb{N}}$.  Thus, we have a sequence $\{y_{n}\}_{n \in \mathbb{N}} $  of unit vectors in $I_{p}M$
 and corresponding Busemann functions $\{b_{y_{n}}\}\, _{n \in \mathbb{N}}$. By $(\ref{1-Lipschitz})$, since  each Busemann function $b_{y_{n}}$ is  $1$-Lipschitz, we have 
$$-d_{F}(x_{o},p)\leq b_{y_{n}}(x_o)\leq d_{F}(p,x_{o}).$$

Taking the supreum  over $x_{o} \in \Omega$, where $p \notin \Omega$, we get
\[ -d_{F}(\Omega , p) = \sup_{x_{o}\in \Omega } -d_{F}(x_{o},p) \leq \sup_{x_{o}\in \Omega } b_{y_{n}}(x_o) \leq \sup_{x_{o}\in \Omega } d_{F}(p,x_{o}) = d_{F}(p, \Omega).\]
Hence, $\{b_{y_{n}}\}_{n \in \mathbb{N}}$ is uniformally  bounded.
\end{proof}
\begin{remark}
\begin{enumerate}[label={\upshape(\roman*)},ref={\upshape(\Roman*)},topsep=1pt, leftmargin=5ex, labelwidth=5ex]
\item Consider a sequence of unit vectors $\{y_{n}\}_{n \in \mathbb{N}}$ in $I_{p}M$ such
that $y_n \rightarrow y$. Then, $\{b_{y_{n}}\}_{n \in \mathbb{N}}$ is an equicontinuous family of Busemann functions that is pointwise bounded on each compact subset $\Omega$ of $M$ (Proposition \ref{bd}). 
 Consequently, by \emph{Ascoli-Arzela theorem}, $\{b_{y_n}: \Omega \longrightarrow \mathbb{R}\}_{n \in \mathbb{N}}$ has a uniformly convergent subsequence, say $\{b_{y_{n_k}}\}$, converging to some function~$f$.
\item
 The limit function $f$ is differentiable almost everywhere. Therefore, the gradient $\nabla f$, is defined and the weak Laplacian $\Delta f$ is defined.

 Now, applying the definition of distributional derivative $(\ref{distributional derivative})$ for $T= \nabla b_{v_{n_k}}$ and $z=1$, we get
\begin{align*}
\int_{\Omega} (\Delta b_{v_{n_k}})\, \phi \,d\mu &= -\int_{\Omega}  d\phi(\nabla b_{v_{n_k}}) \,d\mu = \int_{\Omega} b_{v_{n_k}} \Delta \, \phi   \,d\mu ,\\
\int_{\Omega} (\Delta f)\, \phi \,d\mu &= -\int_{\Omega}  d\phi(f)\, d\mu = \int_{\Omega} f\, \Delta \phi \,d\mu.
\end{align*}
Hence, in the distributional sense, we get $$\nabla b_{y_{n_k}} \longrightarrow \nabla f,\,\,\,\Delta b_{y_{n_k}}  \longrightarrow \Delta f,\, \text{ as } {n_k}\longrightarrow \infty.$$
\item
 $(M,F, d\mu)$ being an AHF-manifold, i.e., $\Delta b_{y_{n_k}} =h$. Therefore, $\Delta f=h$ in the distributional  sense.
Now, using the same technique of the proof of  Theorem~\ref{smooth bv}, we deduce that $f$ is smooth.
\item Both $f,\, b_{y_{n_k}} \in C^{\infty}(\Omega, \mathbb{R}).$ Indeed,  $\lim_{{n_k}\rightarrow \infty} b_{y_{n_k}} = f.$
\item Let $\eta_v$ be the integral curve of $\nabla f$ 
 starting from $p:=\eta(0)$. Then, using
Proposition \ref{f=bv-}, we conclude that $f = b_{\overline\eta}$ on $\Omega$.\end{enumerate}
\end{remark}
We end our paper by the following table which summarizes the main differences between Riemannian and Finsler geometries which we have deal with in  the present work.
\begin{center}
{Table 1: Main differences between Riemannian and Finsler geometries}
\end{center}
\fontsize{10}{10}{\begin{center}
\begin{tabular}{|c|c|c|}
\hline
{Geometric}&{Riemannian}&
{Finsler}  \\
{objects}&{manifold $(M,\alpha)$}&
{manifold $(M,F)$}  \\ 
\hline
 Metric  &$\alpha_{ij}(x)$& $g_{ij}(x,y)$\\
\hline	  
Induced distance & $d_{\alpha}(p,q)$ is symmetric & $d_{F}(p,q)$ is non-symmetric\\
\hline
Exponential map &$C^\infty$ on $T_{x} M$& $C^\infty$ on $T_{x} M_{0}$,\\
$\exp_{x}$ && $C^1$ at null section\\
\hline
Legendre transformation & linear& non-linear\\ 
\hline
 Gradient of a function& linear &non-linear
 \\
\hline
Volume measure&canonically defined &several non-equivalent \\
& and unique & (e.g. Holmes-Thompson)\\
\hline
Laplacian&
unique   & not unique \\
&(Laplace-Beltrami),&(e.g.,  Shen's Laplacian),\\
&
 linear elliptic operator&  non-linear elliptic operator\\
\hline
Asymptotic relation is&  for a  simply connected & for a forward complete \\
an equivalence relation & complete $(M, \alpha)$ of  & simply connected $(M, F)$ \\ between two rays  &non-positive sectional & of non-positive flag  \\
& curvature \cite{Pet16}(YES)& curvature (NO)\cite{ShiBanktesh}; \\
&& but (YES) in case \\ 
&&of Theorem \ref{Thm: two asymptotic rays}\\
\hline
Parallel hypersurfaces: &means $f^{-1}(s)$ is & does not mean  $f^{-1}(s)$ \\
$f^{-1}(t)$ is &parallel to $f^{-1}(t)$&is parallel to $f^{-1}(t)$;\\
parallel to $f^{-1}(s)$&&unless $F$ is reversible\\
\hline
Busemann function& 
associated to a ray,&
associated to a forward ray,  \\  
&1-Lipschitz in the sense& 1-Lipschitz in the sense of\\ 
&$|b_{\gamma}(q)-b_{\gamma}(p)|\leq d_{\alpha}(p,q)$& Eqn. \eqref{1-Lipschitz}%$-d(q,p)\leq b_{\gamma}(q)-b_{\gamma}(p)\leq d(p,q)$
\\ \hline
The mean curvature & In AH-Riemannian space &In AH-Finsler space  \\
of horospheres  &is always non-negative&is a real constant \\
\hline
\end{tabular}
\end{center}}
\noindent
{\bf Acknowledgements.} We would like to express our deep thanks to the referees for their careful reading of this manuscript and their valuable comments which led to the present version.

%%%%%%%%%%

%%%%%%%%%%%%%%%%%%%%%%%%%%%%%%

\EndPaper%%%%  do not remove this command

%%%%%%%%%%%%%%%%%%%%%%%%%%%%%%%%%%%%%%%%%%%%%%%%%%%%%%%

\end{document}